\newtheorem{theorem}{Theorem}[section]
\newtheorem{lemma}[theorem]{Lemma}
\newtheorem{corollary}[theorem]{Corollary}
\newtheorem*{theorem*}{Theorem}{\bf}{\it}
\newtheorem*{proposition*}{Proposition}{\bf}{\it}
\newtheorem*{observation*}{Observation}{\bf}{\it}
\newtheorem*{lemma*}{Lemma}{\bf}{\it}
\theoremstyle{definition}
\newtheorem{definition}[theorem]{Definition}
\theoremstyle{remark}
\newtheorem{remark}[theorem]{Remark}
\newcommand{\p}{\mathcal P}
\tikzset{
	external/system call={
	xelatex \tikzexternalcheckshellescape
	-halt-on-error -interaction=batchmode --shell-escape --enable-write18
	-jobname "\image" "\texsource"}
}
\def\nextAngle{0}
\tikzset{
	next angle/.style={
		in=#1+180,
		out=\nextAngle,
		prefix after command= {\pgfextra{\def\nextAngle{#1}}}
	},
	start angle/.style={
		out=#1,
		nangle=#1,
	},
	nangle/.code={
		\def\nextAngle{#1}
	}
}
\newcommand{\abs}[1]{\left| #1 \right|}
\newcommand{\set}[1]{\left\{ #1 \right\}}
\newcommand{\bracket}[1]{\left( #1 \right)}
\newcommand{\calx}{\mathcal{X}}
\newcommand{\caly}{\mathcal{Y}}
\newcommand{\bbh}{\mathbb{H}}
\renewcommand{\tilde}{\widetilde}
\def\XXint#1#2#3{{\setbox0=\hbox{$#1{#2#3}{\int}$ }
\vcenter{\hbox{$#2#3$ }}\kern-.6\wd0}}
\begin{document}
\title[]{Closed geodesics on hyperbolic surfaces with few intersections}

\author{Wujie Shen}
\address{Department of Mathematics, Tsinghua University}
\email{shenwj22@mails.tsinghua.edu.cn}

\begin{abstract}
 We prove that, if a closed geodesic $\Gamma$ on a complete finite-type hyperbolic surface has at least 2 self-intersections, then the length of $\Gamma$ is bounded below by $2\log(5+2\sqrt6)$, This lower bound is sharp and is achieved by a corkscrew geodesic on a thrice-punctured sphere.
\end{abstract}

\maketitle

\section{Introduction} 
The study of closed geodesics on hyperbolic surfaces constitutes a fundamental topic in hyperbolic geometry, offering profound insights into geometric structures and surface dynamics. Within this field, nonsimple closed geodesics are of particular interest, as their investigation is deeply intertwined with diverse areas of mathematics. These include Teichmüller theory, spectral theory, the geometry of moduli spaces, and group actions on hyperbolic spaces (e.g., \cite{PP1,YHW1,YHW2}). Consequently, understanding the distribution and lengths of nonsimple closed geodesics is essential for unraveling the global structure and dynamical properties of hyperbolic surfaces.

Let $M_k$ represent the infimal length of a nonsimple closed geodesic with self-intersection number at least $k$ among all finite-type hyperbolic surfaces. By studying $M_k$ for different values of $k$, we can gain insight into the prevalence and behavior of nonsimple closed geodesics on hyperbolic surfaces. A natural goal is to find the growth rate or the exact value of $M_k$.

There has been extensive research on the problem so far. When $k = 1$, Hempel demonstrated in \cite{H2797} that the length of a nonsimple closed geodesic has a universal lower bound of $2\log(1+\sqrt{2})$, while Yamada showed in \cite{Y2799} that $2 \cosh^{-1}(3)=4\log(1+\sqrt{2})$ is the best possible lower bound and is achieved on a pair of pants with 3 ideal punctures. Basmajian also demonstrated in \cite{B2794} that a nonsimple closed geodesic has a \emph{stable neighborhood}, and the length of a closed geodesic increases indefinitely as its self-intersection number grows (\cite[Corollary 1.2]{B2794}). In \cite{B2795}, Baribaud calculated the minimal length of geodesics with given self-intersection number or given homotopy types on pairs of pants.

Let $\omega$ represent a closed geodesic or a geodesic segment on a hyperbolic surface $\Sigma$, which can be described as a local isometry $\phi$ from $S^1$ or segment I to $\Sigma$. The length of $\omega$ is denoted as $\ell({\omega})$.

\begin{definition}
Its self-intersection number is represented as $\abs{\omega\cap\omega}$. It counts the intersection points of $\omega$ with multiplicity that an intersection point with $n$ preimages of $\phi$ contribute $\binom{n}{2}$ to $\abs{\omega\cap\omega}$. 
\end{definition}

 First, when $k\rightarrow\infty$, Basmajian showed (\cite[Corollary 1.4]{B2803}) that
\begin{equation}\label{eqn:growth_bas}
\tfrac12\log \frac{k}2\leqslant M_k \leqslant 2\cosh^{-1}(2k+1)\asymp 2\log k
\end{equation}
The notation $f(k)\asymp g(k)$ means that ${f(k)}/{g(k)}$ is bounded from above and below by positive constants. We can conjecture that, when $k\geqslant1$,
\begin{equation}\label{eqn:length_asymptotic}
M_k=2\cosh^{-1}(1+2k)=2\log(1+2k+2\sqrt{k^2+k})
\end{equation}
and the equality holds when $\Gamma$ is a corkscrew geodesic(See definition below) on a thrice-punctured sphere. In other words, on any finite-type hyperbolic surface, a nonsimple closed geodesic with a self-intersection number of at least $k$ must have a minimum length of $2\cosh^{-1}(1+2k)$ and the bound is sharp.

In \cite[Theorem 1.1]{Y2800} Shen-Wang improved the lower bound of $M_k$, that $M_k$ has explicit growth rate $2\log{k}$, and for a closed geodesic of length $L$, the self intersection number is no more than $9L^2e^{\frac{L}{2}}$. The exact value for $M_k$ for sufficiently large $k$ is computed in \cite[Theorem 1.1]{Y2801}, proved the conjecture holds when $k>10^{13350}$. In \cite{Y2806} the lower bound $k>10^{13350}$ is refined to $k>1750$ using another method.

However, to the best of the author's knowledge, for small $k$, even $k=2$ we cannot compute the exact value of $M_k$. In the present paper we give an answer:

\begin{theorem}\label{main}
    When $k=2$, $M_2=2\log(5+2\sqrt6)$, and the lower bound is sharp and attained on a corkscrew geodesic on a thrice punctured sphere.
\end{theorem}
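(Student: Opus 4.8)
The plan is to prove the two inequalities $M_2\le 2\log(5+2\sqrt6)$ and $M_2\ge 2\log(5+2\sqrt6)$ separately, the first by an explicit construction and the second by a structural analysis of geodesics with two double points. For the upper bound I would realize the thrice-punctured sphere as $\mathbb{H}^2/\Gamma(2)$, where $\Gamma(2)$ is generated by the parabolics
\[
X=\begin{pmatrix}1&2\\0&1\end{pmatrix},\qquad Y=\begin{pmatrix}1&0\\2&1\end{pmatrix},
\]
each fixing a cusp. The corkscrew geodesic is the closed geodesic freely homotopic to $X^2Y$, which winds twice around one cusp and once around another. A direct multiplication gives $X^2Y=\begin{pmatrix}9&4\\2&1\end{pmatrix}$, so $\abs{\mathrm{tr}(X^2Y)}=10$ and the translation length is $2\cosh^{-1}(10/2)=2\cosh^{-1}(5)=2\log(5+2\sqrt6)$. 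It then remains to check that this geodesic has self-intersection number exactly $2$, which I would do by drawing the corkscrew and reading off its two double points (or by counting the translates of its axis that cross one period, as in the set-up below). This establishes sharpness.

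For the lower bound, let $\Gamma$ be any closed geodesic with $\abs{\Gamma\cap\Gamma}\ge 2$ on a hyperbolic surface $\Sigma=\mathbb{H}^2/G$, and lift it to the axis $A$ of a primitive hyperbolic element $T\in G$ of translation length $\ell=\ell(\Gamma)$. Self-intersections of $\Gamma$ correspond, modulo the action of $\langle T\rangle$, to translates $wA$ ($w\in G$) that cross $A$; the hypothesis $\abs{\Gamma\cap\Gamma}\ge 2$ guarantees that a fundamental segment of $A$ of length $\ell$ is crossed by the axes of at least two relevant conjugates of $T$. The combinatorics of how such crossings can be arranged by a single closed curve with two double points is highly restricted: up to homeomorphism there are only finitely many Gauss codes, equivalently the curve fills a subsurface of uniformly bounded Euler characteristic (a pair of pants or a one-holed torus and their punctured variants). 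I would first carry out this reduction, so that the problem becomes a finite list of model configurations.

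On each model configuration the crossing strands cut $\Gamma$ into loops $\delta_1,\delta_2$ based at the double points; each loop represents a nontrivial element of $G$, hence is hyperbolic or parabolic and has trace of absolute value at least $2$. Expressing $\mathrm{tr}(T)$ through the loop traces and the crossing angles via the standard Fricke trace identities, together with the hyperbolic trigonometry relating the crossing angle, the common translation length, and the perpendicular distance between $A$ and its crossing translate, reduces each case to a constrained minimization of $\abs{\mathrm{tr}(T)}$ (equivalently of $\ell=2\cosh^{-1}(\abs{\mathrm{tr}(T)}/2)$) subject to the loop traces being at least $2$ and to the discreteness of $G$. I expect every case to be minimized as the loops degenerate to parabolics, forcing the surface to collapse to the thrice-punctured sphere and $\abs{\mathrm{tr}(T)}$ down to $10$.

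The main obstacle is precisely this last step: carrying out the optimization uniformly over all model configurations and proving that none of them — in particular the one-holed-torus type, where the filling surface carries a handle rather than a third cusp — yields a length below $2\cosh^{-1}(5)$, and that the infimum is attained only in the fully degenerate parabolic (thrice-punctured) limit. Controlling this extremal degeneration, where a minimizing sequence leaves moduli space and several quantities simultaneously approach their boundary values, is the delicate part, and it is where the sharp constant $2\log(5+2\sqrt6)$ must be extracted by a careful trace-and-trigonometry estimate rather than by a soft compactness argument.
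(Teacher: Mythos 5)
Your upper bound is fine and agrees with the paper's: the corkscrew geodesic on the thrice-punctured sphere (the class $X^2Y$ in $\Gamma(2)$, trace $10$, length $2\cosh^{-1}(5)$) realizes the value, and the paper obtains the same number from Baribaud's length formula for the geodesic $\Gamma_{1,2}$ in an ideal pair of pants. The lower bound, however, has two genuine gaps. First, $M_2$ is an infimum over geodesics with \emph{at least} two self-intersections, so your reduction to the finitely many Gauss codes of a curve with exactly two double points does not cover curves with three or more self-intersections; the known general lower bounds for $M_k$ (e.g.\ Basmajian's $\tfrac12\log\tfrac k2$) are far too weak to dispose of $k=3,4,\dots$ separately, so these cases cannot be waved away. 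Second, and more seriously, the step you yourself flag as ``the main obstacle'' --- the constrained minimization of $\abs{\mathrm{tr}(T)}$ over each model configuration, with the minimizing sequence degenerating out of moduli space --- is the entire content of the theorem, and you give no mechanism for carrying it out; in particular nothing in the sketch rules out the one-holed-torus configuration beating $2\cosh^{-1}(5)$.

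The paper avoids both problems by a length-splitting argument rather than a trace optimization: it cuts $\Gamma$ at a self-intersection point into the shortest embedded loop $\gamma$ and the complementary arc $\gamma'$, and splits into cases according to whether $\ell(\gamma)\geqslant 1.06\approx2\log(5+2\sqrt6)-4\log(1+\sqrt2)$. If so, then $\ell(\gamma')<4\log(1+\sqrt2)$ and Yamada's sharp bound for one self-intersection forces both $\gamma$ and $\gamma'$ to be homotopic to powers of simple closed geodesics or cusps, which (after a collar estimate in the non-disjoint subcase) places $\Gamma$ in a pair of pants where Baribaud's explicit formula gives $\cosh\tfrac{\ell}2\geqslant 2mn+1\geqslant 5$; if not, $\gamma$ is trapped in a generalized, asymmetric collar and winding-number estimates again either force $\Gamma$ into a pair of pants or directly produce too much length. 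Using Yamada's $k=1$ theorem as a black box is what lets the paper handle arbitrary self-intersection number and sidestep the degeneration analysis your route requires. To salvage your approach you would need (a) an a priori reduction to finitely many intersection patterns valid for all $k\geqslant 2$, and (b) a complete, uniform trace inequality for each pattern; as written, neither is supplied.
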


Note that Theorem \ref{main} can be generalized to general orientable finite-type hyperbolic surfaces, possibly
with holes and geodesic boundaries, since they can be doubled to get a surface as in Theorem \ref{main}.

\subsection*{Acknowledgement} 
We would be thankful to many people for giving some discussions in this paper, especially Yi Huang and Zhongzi Wang for reading the first draft of the paper and giving some helpful comments.

\section{A generalization of the collar lemma}\label{collar_lemma}

In this section, we present a generalization of the collar lemma. In contrast to the standard lemma, we show that a collar remains embedded even when the widths of the two half-collars adjacent to a short closed geodesic are not equal. The generalized collar lemma will be used in the proof of Theorem \ref{LL106}. By increasing the width of the collar on one side, a greater portion of the geodesic $\Gamma$ is forced into a region whose geometry is well-understood. The collar lemma (\cite[Lemma 13.6]{FM2012}) states:
\begin{lemma}\label{collar}
for any simple closed geodesic $c$ in $\Sigma$, $$N(c)=\set{p\in\Sigma\,:\,d(p,c)< w(\ell(c))}$$
is an embedded annulus, where $w(x)$ is defined by
$$w(x):=\sinh^{-1}\bracket{\frac{1}{\sinh({x}/{2})}}.$$
Moreover if $c_1,c_2$ are two disjoint simple closed geodesics in $\Sigma$, $N(c_1)\cap N(c_2)=\emptyset$.
\end{lemma}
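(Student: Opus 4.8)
The plan is to pass to the universal cover $\mathbb H^2$ and reformulate embeddedness as a precise-invariance statement for a single lift. Write $\Sigma=\mathbb H^2/\Gamma$ with $\Gamma\leqslant\mathrm{PSL}(2,\mathbb R)$ discrete and torsion-free, and lift $c$ to the axis $\tilde c$ of a primitive hyperbolic element $g\in\Gamma$ of translation length $\ell=\ell(c)$, whose stabilizer in $\Gamma$ is exactly $\langle g\rangle$. Since a complete geodesic line in $\mathbb H^2$ is convex, the open strip $U_w(\tilde c)=\{p:d(p,\tilde c)<w\}$ is $\langle g\rangle$-invariant with annular quotient, and its image in $\Sigma$ is precisely $N(c)$. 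Thus $N(c)$ is an embedded annulus if and only if $U_w(\tilde c)$ is precisely invariant under $\langle g\rangle$, i.e. $\gamma U_w(\tilde c)\cap U_w(\tilde c)=\emptyset$ for every $\gamma\in\Gamma\setminus\langle g\rangle$.

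First I would record two consequences of the hypotheses that trivialize the geometry of a potential overlap. Because $c$ is simple, any two distinct lifts are disjoint: a transverse crossing of $\tilde c$ and $\gamma\tilde c$ would project to a transverse self-intersection of $c$. Because $\Gamma$ is discrete, $g$ and the conjugate $\gamma g\gamma^{-1}$ (whose axis is $\gamma\tilde c$) cannot share exactly one endpoint on $\partial\mathbb H^2$, so for $\gamma\notin\langle g\rangle$ the lines $\tilde c$ and $\gamma\tilde c$ share no ideal endpoint and their distance $\rho=d(\tilde c,\gamma\tilde c)$ is positive and realized along a common perpendicular. For two disjoint geodesics the open $w$-neighborhoods meet iff $\rho<2w$, so the lemma reduces to the single estimate
\begin{equation*}
d(\tilde c,\gamma\tilde c)\geqslant 2w(\ell)\qquad\text{for all }\gamma\in\Gamma\setminus\langle g\rangle,\ \gamma\tilde c\neq\tilde c.
\end{equation*}

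The heart of the proof, and the step I expect to be the main obstacle, is this distance estimate, which is exactly where discreteness and the common translation length enter quantitatively. I would normalize by an isometry so that $\tilde c$ is the imaginary axis and $g=\operatorname{diag}(e^{\ell/2},e^{-\ell/2})$, and write $\gamma\tilde c$ as the semicircle with endpoints $0<a<b$; a short computation gives $\tanh(\rho/2)=\sqrt{a/b}$, so that the target inequality $\sinh(\rho/2)\sinh(\ell/2)\geqslant 1$ is equivalent to $b/a\leqslant\cosh^2(\ell/2)$. The content is then to show that if this failed, the two hyperbolic elements $g$ and $\gamma g\gamma^{-1}$ of equal translation length with axes this close would generate a non-discrete group or produce a short elliptic element, contradicting that $\Gamma$ is discrete and torsion-free; equivalently, the common perpendicular together with $g$ would force some lift of $c$ to cross $\tilde c$, contradicting simplicity. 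Translating the definition $\sinh w(\ell)=1/\sinh(\ell/2)$, the bound $\rho\geqslant 2w$ is precisely this threshold, which is why $w$ takes this exact value.

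Finally, for the ``moreover'' statement I would run the same argument for two disjoint simple closed geodesics $c_1,c_2$: their lifts are still pairwise disjoint, and the analogous two-axes estimate, now with possibly unequal translation lengths $\ell_1,\ell_2$, yields $d(\tilde c_1,\tilde c_2)\geqslant w(\ell_1)+w(\ell_2)$ for every pair of lifts, so $N(c_1)$ and $N(c_2)$ project to disjoint subsets of $\Sigma$. The only real work beyond the previous paragraph is verifying that the asymmetric half-widths add correctly in the trigonometric estimate.
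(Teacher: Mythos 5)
The paper offers no proof of this lemma at all --- it is quoted verbatim from \cite[Lemma 13.6]{FM2012} --- so the only question is whether your argument stands on its own, and it does not: the central estimate is asserted rather than proved. Your reduction is correct and well organized (pass to the universal cover; $N(c)$ is an embedded annulus iff $d(\tilde c,\gamma\tilde c)\geqslant 2w(\ell)$ for every $\gamma\in\Gamma\setminus\langle g\rangle$; distinct lifts of a simple closed geodesic are disjoint and share no ideal endpoint; the normalization $\tanh(\rho/2)=\sqrt{a/b}$ and the equivalence with $b/a\leqslant\cosh^2(\ell/2)$ both check out). But at the decisive moment you only say that a violation ``would generate a non-discrete group or produce a short elliptic element,'' or alternatively ``would force some lift of $c$ to cross $\tilde c$.'' Neither claim is carried out, and the first is not even the right mechanism for this constant: two hyperbolic elements of equal translation length whose axes are closer than $2w(\ell)$ can perfectly well generate a discrete torsion-free group --- this is exactly the situation of a non-simple closed geodesic, for which the collar lemma with the constant $w(\ell)=\sinh^{-1}(1/\sinh(\ell/2))$ fails and only a weaker stable neighborhood survives. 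Any proof that invokes simplicity only through ``distinct lifts are disjoint, so reduce to a two-axis distance bound'' cannot close, because that much is also true of the axis of a primitive non-simple geodesic versus its other translates' axes being replaced by the relevant configuration; the sharp constant genuinely needs more.

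The missing step is the following quantitative use of simplicity. Given $\gamma$ with $\gamma\tilde c\neq\tilde c$, consider the triple $\tilde c$, $\gamma\tilde c$, $g\gamma\tilde c$ of pairwise disjoint lifts: the common perpendiculars from $\tilde c$ to the latter two are $\sigma$ and $g\sigma$, both of length $\rho$, and their feet on $\tilde c$ are exactly $\ell$ apart. This configuration closes up to a right-angled hexagon (equivalently, $\sigma$ projects to a perpendicular return arc to $c$ whose regular neighborhood is a pair of pants or one-holed torus), and the hexagon/pair-of-pants identity $\cosh d_{12}\,\sinh(\ell_1/2)\sinh(\ell_2/2)=\cosh(\ell_3/2)+\cosh(\ell_1/2)\cosh(\ell_2/2)$ applied with $\ell_1=\ell_2=\ell$ and $\cosh(\ell_3/2)\geqslant 1$ gives $\cosh\rho\,\sinh^2(\ell/2)\geqslant 1+\cosh^2(\ell/2)$, which is exactly $\rho\geqslant 2w(\ell)$ since $\cosh\bigl(2w(\ell)\bigr)\sinh^2(\ell/2)=\sinh^2(\ell/2)+2=\cosh^2(\ell/2)+1$. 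The same identity with unequal lengths yields $d(\tilde c_1,\tilde c_2)\geqslant w(\ell_1)+w(\ell_2)$, because $\cosh\bigl(w(\ell_1)+w(\ell_2)\bigr)\sinh(\ell_1/2)\sinh(\ell_2/2)=1+\cosh(\ell_1/2)\cosh(\ell_2/2)$, which is the ``moreover'' clause. Until you supply this (or an equivalent) computation, your proposal is a correct reduction with the theorem's actual content still missing.
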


Let $\Sigma'$ be the surface constructed by cutting along curve $c$ on surface $\Sigma$, and denote the boundary components obtained from cutting $c$ by $c'$ and $c''$. Let $\phi':\Sigma'\rightarrow\Sigma$ denote the gluing map by reattaching the boundary components $c'$ and $c''$ back to the curve on $\Sigma$. In $\Sigma'$ there exists a \emph{maximal collar} $N'(c')$  that $\delta(c')>0$ is the maximum real number for which 
$$\tilde{N}'(c')=\set{p\in\Sigma'\,:\,d(p,c')<\delta(c')}$$
 is an embedded annulus in $\Sigma'$. $N'(c)=\phi'(N'(c'))$ is an embedded annulus(a half collar) with $c$ as one of its boundary components.
We define 
$$w_1(x)=\log\bracket{\frac{e^{x/4}+1}{e^{x/4}-1}}=\sinh^{-1}\bracket{\frac{1}{\sinh(x/4)}}$$
for $x>0$. Note that for $x\leqslant\log(5+2\sqrt6)<2.3$ we have $w_1(x)<2w(x)$, and for all $x>0$ we have $w_1(x)>w(x)$.

Since $w_1(x)>w(x)$ for $x>0$, we have
$$N''(c'')=\set{p\in\Sigma'\,:\,d(p,c'')<2w(\ell(c))-w_1(\ell(c))}\subseteq\Sigma'$$
is either empty or an embedded annulus, and define $N''(c)=\phi'(N''(c''))\subseteq\Sigma$, either empty or an embedded annulus also. Define 
$$N_1(c)=N'(c)\cup N''(c)$$

In this section, we will prove the generalized collar lemma for closed geodesic $c\subseteq\Sigma$:

\begin{lemma}[Generalized collar lemma]
 Notations as above. $N_1(c)$ is an embedded annulus in $\Sigma$.
\end{lemma}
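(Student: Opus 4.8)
The plan is to reduce the lemma to a single distance inequality in the cut surface $\Sigma'$, and then to a sharp configuration estimate in $\bbh$. First I would note that the gluing map $\phi'$ is injective off $c'\cup c''$ and identifies $c'$ with $c''$. By the very definition of $\delta(c')$ the half-collar $N'(c')$ is embedded, and $N''(c'')$ has width $2w(\ell(c))-w_1(\ell(c))<w(\ell(c))$, so it is a sub-collar of the standard collar of Lemma~\ref{collar} and is embedded as well; moreover neither half-collar wraps around to meet the opposite boundary, so $\phi'$ is injective on each. Hence the only way $N_1(c)=\phi'(N'(c'))\cup\phi'(N''(c''))$ can fail to be an embedded annulus is that the two half-collars overlap in the interior of $\Sigma'$. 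Thus (writing $\ell=\ell(c)$) the lemma is equivalent to
\[
\dist_{\Sigma'}(c',c'')\ \ge\ \delta(c')+\bracket{2w(\ell)-w_1(\ell)}.
\]

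To analyze this I would lift to $\bbh$, taking $A$ to be the axis of the generator $T:z\mapsto e^{\ell}z$ corresponding to $c$, and placing the half-collar $N'$ to one side of $A$ (say the east). Since $c$ is simple, the lifts of $c$ lying east of $A$ are pairwise disjoint semicircles, each of which is of one of two kinds: a \emph{return} lift, whose common perpendicular with $A$ projects to an orthogeodesic that leaves and returns to $c$ on the same side, or a \emph{crossing} lift, whose common perpendicular projects to an orthogeodesic joining the two sides of $c$. Then $2\delta(c')$ is exactly the distance from $A$ to the nearest return lift, while $\dist_{\Sigma'}(c',c'')=D$ is the distance from $A$ to the nearest crossing lift. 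The standard collar lemma, applied with half-width $w(\ell)$ on each side, says every lift is at distance $\ge 2w(\ell)$ from $A$; in particular $D\ge 2w(\ell)$ and $\delta(c')\ge w(\ell)$.

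The argument then splits into two cases according to the type of the nearest east lift. If it is a return lift, every crossing lift is at least as far, so $D\ge 2\delta(c')\ge \delta(c')+w(\ell)$, which already exceeds $\delta(c')+(2w(\ell)-w_1(\ell))$ because $w_1(\ell)>w(\ell)$; this case needs nothing beyond the collar lemma. The whole difficulty is concentrated in the opposite case, where the nearest east lift is a crossing lift and $D$ may be as small as $2w(\ell)$. There I must prove $\delta(c')\le w_1(\ell)$ — equivalently, that a short crossing orthogeodesic forces a same-side return orthogeodesic of length at most $2w_1(\ell)=2w(\ell/2)$ — for then
\[
\delta(c')+\bracket{2w(\ell)-w_1(\ell)}\ \le\ w_1(\ell)+2w(\ell)-w_1(\ell)\ =\ 2w(\ell)\ \le\ D.
\]

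The crux, and the step I expect to be the main obstacle, is this forcing statement in the crossing case. I would attack it by explicit hyperbolic trigonometry: normalizing $A=(0,\infty)$ and $T:z\mapsto e^{\ell}z$, a lift with feet $0<a<b$ lies at distance $d$ with $\cosh d=\tfrac{a+b}{b-a}$, while disjointness of its $T$-translates (simplicity of $c$) constrains the feet-ratio $b/a$. Starting from the nearest crossing lift and combining it with $T$ and the pairwise disjointness of all lifts, one produces a return lift and bounds its distance; I expect the extremal configuration to be governed by a feet-ratio of size $e^{\ell/2}$ — the "half-translation'' ratio — which under $\cosh d=\tfrac{a+b}{b-a}$ yields exactly $\sinh(w_1)\sinh(\ell/4)=1$, i.e. $\delta(c')\le w_1(\ell)$. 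This is precisely why $w_1$ is defined with $\ell/4$ in place of the $\ell/2$ of $w$: the constant is tuned so that equality in the forced return estimate lines up with equality in the standard bound $D\ge 2w(\ell)$, closing the inequality. Everything outside this sharp estimate is routine bookkeeping with the collar lemma.
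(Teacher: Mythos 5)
Your reduction points in the wrong direction, and the step you yourself flag as the crux is not only unproved (the ``feet-ratio $e^{\ell/2}$'' passage is a guess, not an argument) but false. You take $N'(c)$ to be the \emph{maximal} half-collar, of width $\delta(c')$, so you are forced to prove the upper bound $\delta(c')\leqslant w_1(\ell)$ whenever the crossing distance $D=\dist_{\Sigma'}(c',c'')$ is near $2w(\ell)$. Test this on a once-punctured torus with $c$ a (necessarily non-separating) geodesic of length $\ell<2.3$: cutting gives a pair of pants with boundary circles $c',c''$ of length $\ell$ and one cusp. The seam from $c'$ to $c''$ satisfies $\cosh s=\bracket{1+\cosh^2(\ell/2)}/\sinh^2(\ell/2)=\cosh(2w(\ell))$, so $D=2w(\ell)$ exactly; on the other hand every same-side return orthogeodesic of $c'$ spans a pair of pants in which at most one of the two remaining boundary components is a cusp, and the paper's hexagon estimate ($\ell(\gamma)\geqslant 2d(\bbh^1,\omega_3)$ with $\omega_3\neq\omega_3'$) then gives the \emph{strict} inequality $\delta(c')>w_1(\ell)$, since $2w_1(\ell)$ is attained only when both other boundaries degenerate to cusps. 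Hence $\delta(c')+\bracket{2w(\ell)-w_1(\ell)}>2w(\ell)=D$, the two half-collars overlap, and with your reading of $N'(c)$ the statement itself fails; no trigonometric refinement of your ``crossing case'' can produce the forced return lift at distance $\leqslant 2w_1(\ell)$, because it does not exist.

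The intended meaning (made explicit in the restatement inside the half-collar lemma and in the later applications) is that $N'(c)$ has width $w_1(\ell(c))$, not $\delta(c')$. The genuine content is therefore the \emph{lower} bound $\delta(c')\geqslant w_1(\ell)$: every orthogeodesic leaving and returning to $c$ on the same side has length at least $2w_1(\ell)$, which the paper proves by placing such an arc in the pair of pants it spans and bounding its length below by twice the distance from $c$ to the opposite seam of the hexagon decomposition --- this is exactly why $w_1$ is defined with $\ell/4$. Once the width-$w_1$ half-collar is known to be embedded, the non-overlap step is elementary and needs none of your case analysis: a point lying in both half-collars would yield a piecewise-geodesic arc of length less than $w_1+(2w-w_1)=2w$ joining the two sides of $c$ while staying within distance $w$ of $c$, hence inside $N(c)$, which is impossible because $N(c)\setminus c$ is disconnected. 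In short, the sharp estimate the lemma needs is a lower bound on return orthogeodesics; your argument replaces it with an upper bound that is unavailable.
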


\begin{figure}[htbp]
\centering
\tikzexternaldisable
\begin{tikzpicture}[declare function={
	R = 9;
	f(\x) = R+.5-sqrt(R*R-\x*\x);
	ratio = 3;
}]
\def\x{1}
\draw [thick] (-3.2,{f(3.2)}) arc ({270-asin(3.2/R)}:{270+asin(4/R)}:{R} and {R});
\draw [thick] (-3.2,{-f(3.2)}) arc ({90+asin(3.2/R)}:{90-asin(4/R)}:{R} and {R});


\foreach \a/\b in {0/blue, 4/red} {
	\draw [color=\b, thick] (\a,{-f(\a)}) arc (-90:90:{f(\a)/ratio} and {f(\a)});
	\draw [dashed, color=\b, thick] (\a,{f(\a)}) arc (90:270:{f(\a)/ratio} and {f(\a)});
}

\foreach \a/\b in {-3.2/red} {
	\draw [color=\b, thick] (\a,{-f(\a)}) arc (-90:90:{f(\a)/ratio} and {f(\a)});
	\draw [color=\b, thick] (\a,{f(\a)}) arc (90:270:{f(\a)/ratio} and {f(\a)});
}


\path (-2,0) node[circle, inner sep=1pt, label={[label distance=.4em, anchor=center]0:$N''(c)$}]{};

\path (2,0) node[circle, inner sep=1pt, label={[label distance=.4em, anchor=center]0:$N'(c)$}]{};

\path ({f(0)/ratio},0) node[circle, inner sep=1pt, label={[label distance=.4em, anchor=center]0:$c$}]{};
  			
\end{tikzpicture}

\caption{\label{fig:special_case}
The generalized collar $N_1(c)$}
\end{figure}

First is the half-collar is an embedded annulus:

\begin{lemma}
 If $\ell(c')<2.3$, then we have
 $\delta(c')\geqslant w_1(\ell(c'))$.
 In other words, the collar $$N'(c')=\set{x\in\Sigma'\,:\,d(x,c)< w_1(\ell(c))}$$ is an embedded annulus.
\end{lemma}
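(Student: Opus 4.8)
The plan is to pass to the universal cover $\mathbb{H}$ and reduce the embeddedness of the width-$w_1(\ell)$ collar to a lower bound on the shortest return arc of $c'$, where $\ell:=\ell(c')$. Lift $c'$ to a geodesic $A\subset\mathbb{H}$, the axis of the holonomy $g$ of $c'$, and let $\Gamma'$ be the deck group of $\Sigma'$, so that the lifts of $c'$ are exactly the $\Gamma'$-translates of $A$. Since the half-neighborhood $\set{z: d(z,A)<\delta}$ on the interior side is $\langle g\rangle$-invariant and foliated by the geodesics perpendicular to $A$, which stay pairwise disjoint on one side of $A$, its image in $\Sigma'$ is an embedded half-annulus for \emph{every} $\delta$; because the curvature is negative there are no focal points, so the only obstruction to embeddedness is a cut point of $c'$, i.e. two distinct lifts of $c'$ facing each other across the lift of $\Sigma'$. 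Hence
\[
\delta(c')=\tfrac12\min_{h\in\Gamma',\,hA\neq A}d(A,hA),
\]
which is half the length of the shortest geodesic arc $\eta$ leaving $c'$ and returning to $c'$ perpendicularly. It therefore suffices to prove that every such $\eta$ has length at least $2w_1(\ell)$.

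Next I would analyze the shortest return arc $\eta$ through its regular neighborhood. The set $N(c'\cup\eta)$ is a pair of pants; let $b_1,b_2$ be the geodesic representatives (possibly cusps) of its two other boundary curves, so that $c',b_1,b_2$ bound a pair of pants with fundamental group $\langle g,h\rangle$, where $hA\neq A$ realizes the minimum above. The key point is that the lift $\eta$ is \emph{exactly} the common perpendicular of $A$ and $hA$ in $\mathbb{H}$: its endpoints lie in the limit set of $\langle g,h\rangle$, so $\eta$ lies in the convex hull of that limit set and coincides with the pants orthogeodesic. Consequently $\ell(\eta)=m(\ell;\ell_1,\ell_2)$ is the \emph{exact} function of the three cuff lengths $\ell,\ell_1=\ell(b_1),\ell_2=\ell(b_2)$ given by the right-angled hexagon decomposition of the pants. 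A direct holonomy computation (writing $g=p_1p_2$ with $p_1,p_2$ the boundary generators and evaluating $d(A,p_1A)$ between the axes of $g$ and $p_1gp_1^{-1}$) shows that in the degenerate case $\ell_1=\ell_2=0$ of two cusps one gets precisely $\sinh\!\bracket{m/2}\sinh\!\bracket{\ell/4}=1$, that is $m=2w_1(\ell)$.

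It then remains to show that this two-cusped configuration minimizes the return length, i.e. $m(\ell;\ell_1,\ell_2)\geq m(\ell;0,0)=2w_1(\ell)$. I would prove that $m$ is nondecreasing in each of $\ell_1,\ell_2$ — geometrically, widening a cuff pushes $c'$ away from its return — either by differentiating the hexagon relation or by a trace-monotonicity argument for $\cosh d(A,p_1A)$ as a function of the cuff traces. Combining the exact identity $\ell(\eta)=m(\ell;\ell_1,\ell_2)$ from the previous paragraph with this monotonicity gives $\ell(\eta)\geq 2w_1(\ell)$, hence $\delta(c')\geq w_1(\ell)$, as required.

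The main obstacle is the case in which $\eta$ is non-separating in $\Sigma'$, so that $b_1$ and $b_2$ become isotopic and $\langle g,h\rangle$ is the fundamental group of a one-holed torus (with $g$ a commutator) rather than a pair of pants; a priori the return arc across such a handle can be shorter, and this is exactly where the hypothesis $\ell(c')<2.3$ enters, guaranteeing that the extremal configuration is the two-cusped pair of pants and ruling out a return shorter than $2w_1(\ell)$. I expect this separating/non-separating case analysis, together with the cuff-length monotonicity, to be the technical heart of the argument, while the reduction of the first paragraph and the two-cusp extremal computation are routine.
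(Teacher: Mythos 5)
Your overall strategy coincides with the paper's: reduce $\delta(c')$ to half the length of the shortest orthogeodesic return arc $\eta$ from $c'$ to itself, realize $\eta$ inside a pair of pants with $c'$ as one cuff, observe that the two-cusp ideal pants gives exactly $\sinh(\ell(\eta)/2)\sinh(\ell/4)=1$, i.e.\ $\ell(\eta)=2w_1(\ell)$, and show every other configuration gives a longer return. The paper carries out the monotonicity step by an explicit Poincar\'e-disk picture: the third side $\omega_3$ of the right-angled hexagon must be disjoint from the two perpendiculars $\omega_1,\omega_2$ erected at the feet of $\eta$, hence is separated from the lift of $c$ by the ideal geodesic $\omega_3'$ joining the endpoints at infinity of $\omega_1,\omega_2$, so $d(\widetilde c,\omega_3)\geqslant d(\widetilde c,\omega_3')=w_1(\ell)$. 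That is the same content as your ``$m(\ell;\ell_1,\ell_2)\geqslant m(\ell;0,0)$'' claim, and it is a one-line geometric argument rather than a differentiation of the hexagon relation, so leaving that step as an assertion is only a minor omission.

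The genuine gap is in your first paragraph: the identity $\delta(c')=\tfrac12\min_{hA\neq A}d(A,hA)$ only accounts for obstructions coming from other lifts of $c'$ itself. But $\Sigma'$ has a second boundary component $c''$ (the other copy of the cut curve), whose lifts are \emph{not} $\Gamma'$-translates of $A$, and the maximal embedded collar of $c'$ can equally well be stopped by running into $c''$ --- in $\Sigma$, this is the collar wrapping around and hitting $c$ from the opposite side. Your argument never bounds this from below, and the return-arc analysis alone cannot, since $d_{\Sigma'}(c',c'')$ is only guaranteed to be at least $2w(\ell)$ by the standard collar lemma, and $2w(\ell)\geqslant w_1(\ell)$ fails for large $\ell$. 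This is precisely, and only, where the hypothesis $\ell(c')<2.3$ enters the paper's proof: for such $\ell$ one has $w_1(\ell)<2w(\ell)$, so if $\closure{\tilde N'(c')}$ meets $c''$ the desired width is already achieved. Your diagnosis that the hypothesis is needed to rule out a non-separating return arc (one-holed-torus configuration) is a misattribution: the lower bound $\ell(\eta)\geqslant 2w_1(\ell)$ for the return arc holds for all $\ell$ with no length restriction, and the regular neighborhood of $c'\cup\eta$ in an orientable surface is always a pair of pants (at worst with two cuffs freely homotopic to each other), so no separate handle case arises. You should add the $c''$ case, with the two-sided collar lemma plus $w_1<2w$, to close the argument.
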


\begin{proof}

We know $\tilde{N}'(c')$ has two boundary components. Define  $\overline{\tilde{N}'(c')}$ to be the closure of $\tilde{N}'(c')$. If $\overline{\tilde{N}'(c')}\cap c''\neq\emptyset$, then the result holds, as $w_1(\ell(c'))<2w(\ell(c'))$, which implies $N'(c')\cap c''=\emptyset$.

If $\overline{\tilde{N}'(c')}$ does not intersect with $c''$, then one of the components of $\partial N'(c)$ is $c$ and the other is a curve tangent to itself due to the maximality of $\delta(c')$. Assuming $q\in\partial N'(c)$ is one of the tangent points. Therefore there exists a shortest simple geodesic $\gamma$ contained in $\overline{N'(c)}\subseteq\Sigma$ which is joined by two different shortest geodesic segments of length $\delta(c')$ connecting $q$ and $c$ in $N'(c)$. $\gamma$ has endpoints $r_1,r_2\in c$, and is perpendicular to $c$ at $r_1,r_2$.

Suppose $c\setminus\{r_1,r_2\}$ contains two arcs $c_1'$ and $c_2'$, and $c_1'\cup\gamma$, $c_2'\cup\gamma$ are simple closed curves, hence freely homotopic to simple closed geodesics $c_1''$ and $c_2''$, then $c,c_1'',c_2''$ are boundary components of a pair of pants $\Sigma_0\subseteq\Sigma$, since $c_1''\cup\gamma$ and $c_2''\cup\gamma$ cannot form bigons, $c_1''\cap\gamma=c_2''\cap\gamma=\emptyset$, we have $\gamma\subseteq\Sigma_0$.

\begin{figure}[htbp]
\centering
\tikzexternaldisable
\begin{tikzpicture}
\def\x{4}
\coordinate (O) at (0,0);
\coordinate (P) at (-2.2*\x,0);
\coordinate (Q) at (2.2*\x,0);
\coordinate (K) at (0, 1.3*\x);

\draw[color=black]
  (4,0) arc (0:180:4);
\draw[color=cyan, thin] (180:\x) to (0:\x);

\pgfmathsetmacro{\anglePG}{\pgfmathresult}
\draw[name path=arc_GSS, color=cyan, thin]
  (-1,0) arc (0:28.07:7.5);
\draw[name path=arc_GS, color=cyan, thin]
  (1,0) arc (180:151.93:7.5);
 
\pgfmathsetmacro{\angleKE}{\pgfmathresult}
\draw[name path=arc_EF, color=cyan, thin]
  (0,3) arc (270:343.74:1.1667);
\draw[name path=arc_EF, color=cyan, thin]
  (0,3) arc (270:196.26:1.1667);

\draw[name path=arc_EF, color=cyan, thin]
  (0,2.4) arc (270:331.93:2.1333);
\draw[name path=arc_EF, color=cyan, thin]
  (0,2.4) arc (270:208.07:2.1333);

\path (0,2.4) node[circle, inner sep=1pt, label={[shift={(270:0.3)}, anchor=center]{$\omega_3'$}}]{};
\path (0,3) node[circle, inner sep=1pt, label={[shift={(90:0.3)}, anchor=center]{$\omega_3$}}]{};

\draw[color=red, thick] (-1,0) to (1,0);
\draw[name path=arc_GS, color=red, thick]
  (-1,0) arc (0:22:7.5);
\draw[name path=arc_GS, color=red, thick]
  (1,0) arc (180:158:7.5);
\draw[name path=arc_EF, color=red, thick]
  (0,3) arc (270:322:1.1667);
\draw[name path=arc_EF, color=red, thick]
  (0,3) arc (270:218:1.1667);
\draw[color=red, thick] (-0.9,3.4) to (-1.56,2.77);
\draw[color=red, thick] (0.9,3.4) to (1.56,2.77);

\path (-1,3.7) node[circle, inner sep=1pt, label={[shift={(135:0.45)}, anchor=center]{$Q_1$}}]{};
\path (1,3.7) node[circle, inner sep=1pt, label={[shift={(45:0.45)}, anchor=center]{$Q_2$}}]{};
\path (-1,0) node[circle, fill, inner sep=1pt, label={[shift={(235:0.45)}, anchor=center]{$A_1$}}]{};
\path (1,0) node[circle, fill, inner sep=1pt, label={[shift={(305:0.45)}, anchor=center]{$A_2$}}]{};
\path (-1.8,3.3) node[circle, inner sep=1pt, label={[shift={(235:0.45)}, anchor=center]{$P_1$}}]{};
\path (1.8,3.3) node[circle, inner sep=1pt, label={[shift={(305:0.45)}, anchor=center]{$P_2$}}]{};

\path (O) node[circle, fill, inner sep=1pt, label={[shift={(270:0.3)}, anchor=center]{$O$}}]{};
\path (-1.3,2) node[circle, inner sep=1pt, label={[shift={(245:0.3)}, anchor=center]{$\omega_1$}}]{};
\path (1.3,2) node[circle, inner sep=1pt, label={[shift={(305:0.3)}, anchor=center]{$\omega_2$}}]{};

\end{tikzpicture}
\caption{\label{fig:boundary_length_annulus000000}
A hexagon of $\Sigma_0$}
\end{figure}
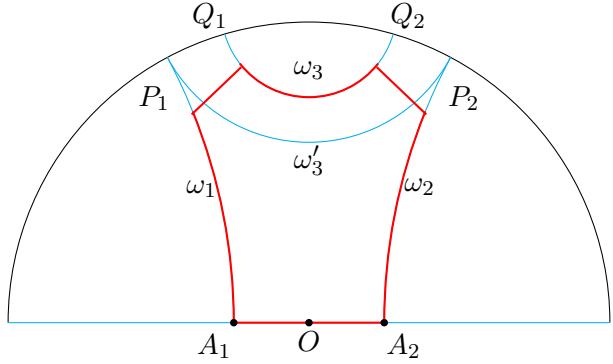

Next we prove $\ell(\gamma)\geqslant w_1(x)$. $\Sigma_0$ can be constructed by gluing two hexagons along three pairwise-nonadjacent boundary segments. In the Poincar\'{e} disk model of $\bbh^2$, let $\bbh^1$ be the horizontal line. Let $A_1,A_2\subseteq\bbh^1$, $A_1(-r,0),A_2(r,0)$ satisfying $\log\frac{1+r}{1-r}=\frac{\ell(c)}4$. $\omega_1,\omega_2$ are geodesics in $\bbh^2$ passes $A_1,A_2$ and perpendicular to $\bbh^1$. $P_1(-\frac{2r}{1+r^2},\frac{1-r^2}{1+r^2})$, $P_2(\frac{2r}{1+r^2},\frac{1-r^2}{1+r^2})$ are one of endpoints of $\omega_1,\omega_2$. $\omega_3'$ is the geodesic connecting $P_1,P_2$. Let $\omega_3$ be the geodesic $Q_1Q_2$ where $Q_1,Q_2$ are both on the infinity boundary, $\omega_3\cap\omega_1=\emptyset$ and $\omega_3\cap\omega_2=\emptyset$. For $j=1,2$ assume $\omega_j'$ be the unique geodesic segment perpendicular to $\omega_j$ and $\omega_3$. Then $A_1A_2, \omega_1, \omega_1', \omega_3, \omega_2', \omega_2$ constitute the boundary of a hexagon, and $\Sigma_0$ is constructed by gluing 2 copies of the hexagon along $\omega_1, \omega_2, \omega_3$, as in Figure \ref{fig:boundary_length_annulus000000}.

Since $\gamma\subseteq\Sigma_0$, then 
\begin{align*}
 \ell(\gamma)&\geqslant 2d(\bbh^1,\omega_3)\geqslant 2d(\bbh^1,\omega_3')=2\log\bracket{1+\frac{1-r}{1+r}}-2\log\bracket{1-\frac{1-r}{1+r}}\\
&=2\log\frac{e^{{\ell(c)}/4}+1}{e^{{\ell(c)}/4}-1}=2w_1(\ell(c))
\end{align*}

\end{proof}

\begin{proof}
 If not, then there exists $y\in N'(c)\cap N''(c)$, connecting $c$ and $y$ by shortest geodesic $\zeta$ in $N'(c)$ and $\zeta'$ in $N''(c)$, and $\ell(\zeta)+\ell(\zeta')<2w(\ell(c))$, hence for all $z\in\zeta\cup\zeta'$, $d(z,c)\leqslant\frac12(\ell(\zeta)+\ell(\zeta'))<w(\ell(c))$, hence $\zeta\cup\zeta'\subseteq N(c)$. $\zeta\cup\zeta'$ is a curve that is piecewisely geodesic, and only intersects $c$ at endpoints so it lies on one side of $N(c)$, which is defined in Lemma \ref{collar}, but this is impossible since $N'(c)$ and $N''(c)$ are on the different side of $c$.
\end{proof}

Similarly, for the ideal punctures of $\Sigma$, we obtain a similar result. In the case where $\Sigma$ has punctures, we consider the universal covering $p:\mathbb{H}^2\rightarrow\Sigma$, where $\mathbb{H}^2$ represents the hyperbolic plane. Each puncture has a neighborhood whose boundary lifts to a union of horocycles that can intersect at most at points of tangency. This type of neighborhood is known as a \emph{horocycle neighborhood} of the surface.

In the upper half-plane model for $\mathbb{H}^2$, let $\Gamma$ be a cyclic group generated by a parabolic isometry of $\mathbb{H}^2$ that fixes the point $\infty$. Let $H_t = \{(x, y)\in \mathbb{H}^2 | y \geqslant t\}$ be a horoball. Each cusp can be modeled as $H_t/\Gamma$ for some $c$ up to isometry and is diffeomorphic to $S^1 × [c, \infty)$, so that each circle $S^1 × {t}$ with $t \geqslant c$ is the image of a horocycle under $p$. Each circle is also referred to as a horocycle. The circle $S^1 × {s}$ with $s \geqslant t$ is called an Euclidean circle. A cusp is considered maximal if it lifts to a union of horocycles with disjoint interiors, and there exists at least one point of tangency between different horocycles.

\begin{lemma}[Adams, \cite{A2817}]\label{lemma:cusp_maximal_area}
For an orientable, metrically complete hyperbolic surface, the area with a maximal cusp is at least $4$. The lower bound $4$ is realized only in an ideal pair of pants.
\end{lemma}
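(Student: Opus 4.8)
The plan is to work in the upper half-plane model and reduce the statement to a length computation on the boundary horocycle, after which the lower bound becomes a discreteness inequality for a two-generator subgroup. First I would normalize the cusp to lie at $\infty$, with the parabolic generator of its stabilizer being $P\colon z\mapsto z+s$, so that $s>0$ is the width of the cusp. The maximal horoball is $H=\set{(x,y)\in\bbh^2 : y\geqslant t_0}$, and for $g=\bracket{\begin{smallmatrix} a & b\\ c & d\end{smallmatrix}}\in\Gamma$ with $c\neq0$ the horoball $gH$ is tangent to $\R$ at $a/c$ with Euclidean diameter $1/(c^2t_0)$. Hence $H$ and all its $\Gamma$-translates have disjoint interiors exactly when $t_0\geqslant 1/c_{\min}$, where $c_{\min}=\min\set{\abs{c} : g\in\Gamma,\ c\neq0}$; maximality of the cusp forces $t_0=1/c_{\min}$ together with a tangency realized by some $g_0$ with $\abs{c}=c_{\min}$. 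A direct integration over the strip $\set{0\leqslant x<s,\ y\geqslant t_0}$ shows that the area of the cusp equals the length of its boundary horocycle, namely $s/t_0=s\,c_{\min}$, so the lemma is equivalent to the inequality $s\,c_{\min}\geqslant 4$.

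The key reduction I would then make is to pass to the two-generator subgroup $\Gamma_0=\langle P, g_0\rangle\leqslant\Gamma$. Because $P\in\Gamma_0$, the stabilizer of $\infty$ in $\Gamma_0$ is still $\langle P\rangle$, so the cusp width is unchanged; and since $g_0\in\Gamma_0\subseteq\Gamma$ realizes the minimum $\abs{c}$, one has $c_{\min}(\Gamma_0)=c_{\min}(\Gamma)$. Thus the maximal cusp of $\bbh^2/\Gamma_0$ has the same area as that of $\Sigma$, and it suffices to prove $s\,c_{\min}\geqslant 4$ for $\Gamma_0$. Being a subgroup of a surface group, $\Gamma_0$ is discrete, torsion-free and free; torsion-freeness means $\Gamma_0$ contains no elliptic element, so every $g\in\Gamma_0$ not fixing $\infty$ is parabolic or hyperbolic and satisfies $\abs{\mathrm{tr}\,g}\geqslant 2$. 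Conjugating inside the stabilizer of $\infty$, I would further normalize $g_0$ so that its remaining fixed point sits at $0$.

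From here the bound becomes a discreteness computation. When $g_0$ is parabolic, $\Gamma_0=\langle\bracket{\begin{smallmatrix}1&s\\0&1\end{smallmatrix}},\bracket{\begin{smallmatrix}1&0\\t&1\end{smallmatrix}}\rangle$ with $\abs{t}=c_{\min}$, and the classical criterion that two real parabolics generate a discrete free group exactly when $\abs{st}\geqslant 4$ gives $s\,c_{\min}\geqslant 4$; the boundary case $\abs{st}=4$ is rigid and is precisely the thrice-punctured sphere ($\Gamma(2)$ up to conjugacy). When $g_0$ is hyperbolic one argues similarly: the horoballs $g_0H$ and $g_0^{-1}H$ are tangent to $H$ at $g_0(\infty)$ and $g_0^{-1}(\infty)$, whose distance is $\abs{\mathrm{tr}\,g_0}/c_{\min}\geqslant 2/c_{\min}=2t_0$, and feeding the strict inequality $\abs{\mathrm{tr}\,g_0}>2$ into a J{\o}rgensen/Shimizu-type estimate for $\langle P,g_0\rangle$ should force $s\,c_{\min}>4$, so the minimum is attained only in the two-parabolic case. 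The hard part will be exactly this last step: verifying that $c_{\min}$ is genuinely realized by the distinguished generator (via the Ford packing of $\Gamma_0$) and pinning down the sharp constant in the parabolic-plus-hyperbolic case, together with confirming that the extremal configuration is rigid. The equality analysis then identifies the unique minimizer as the ideal pair of pants.
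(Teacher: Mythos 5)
First, a point of comparison: the paper does not prove this lemma at all --- it is quoted verbatim from Adams \cite{A2817} --- so your proposal is being measured against the literature rather than against an argument in the text. Much of your setup is sound. The identification of the cusp area with the boundary horocycle length $s/t_0=s\,c_{\min}$, the formula $t_0=1/c_{\min}$ for the maximal horoball, and the passage to $\Gamma_0=\langle P,g_0\rangle$ (which preserves both the cusp width and $c_{\min}$, since $g_0\in\Gamma_0$ still realizes the minimum) are all correct. The two-parabolic case also goes through, though I would phrase it via traces rather than via the ``free iff $\abs{st}\geqslant4$'' criterion: since $\mathrm{tr}(Pg_0^{\pm1})=2\pm st$, one of these is in $(-2,2)$ whenever $\abs{st}<4$, producing an elliptic element of a discrete torsion-free group --- a contradiction --- and equality forces $Pg_0^{\pm1}$ parabolic, which is the thrice-punctured-sphere configuration.

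The genuine gap is the parabolic-plus-hyperbolic case, and the tools you name provably cannot close it. J{\o}rgensen's inequality for $\langle P,g_0\rangle$ with $P$ parabolic reads $\abs{\mathrm{tr}(Pg_0P^{-1}g_0^{-1})-2}=s^2c^2\geqslant1$, which is exactly Shimizu's lemma and yields only $s\,c_{\min}\geqslant1$, i.e.\ area at least $1$. Your tangency observation does better but still falls short: $g_0H$ and $g_0^{-1}H$ are diameter-$t_0$ horoballs tangent to $H$ at $a/c$ and $-d/c$, lying in distinct $\langle P\rangle$-orbits (coincidence would force $(P^ng_0)^2=1$, an involution), so each period of length $s$ contains two such tangency points whose consecutive gaps are each at least $t_0$ by disjointness --- giving area at least $2$, not $4$. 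The estimate $\abs{a/c+d/c}=\abs{\mathrm{tr}\,g_0}/c_{\min}\geqslant2t_0$ does not improve this, because what controls the gaps is the separation of the two orbits modulo $s$, not the separation of the two representatives in $\R$. Closing the factor of $2$ is precisely the content of Adams' theorem and requires a further geometric input (in effect one must show that each of the two arcs of the boundary horocycle between the two preimages of the self-tangency point has length at least $2$, by analyzing the images of the full-sized horoballs under $g_0^{\pm1}$). The rigidity assertion is likewise incomplete: even granting equality forces $\Gamma_0$ to be a thrice-punctured-sphere group, you still must rule out $\Gamma\supsetneq\Gamma_0$. As written, the proposal establishes area $\geqslant2$, not $\geqslant4$.
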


\begin{remark}
If $c$ is a cusp, we define $N'(c)$ as the union of horocycles of $c$ whose lengths are less than 4. This forms an embedded cylinder.
\end{remark}

\section{geodesics on pair of pants}\label{geodesic_pants}

In this section, we outline the classification of closed geodesics on a pair of pants from \cite{B2795}. In the subsequent proof, we will demonstrate that the shortest closed geodesic with a self-intersection number of 2 must lie in some pair of pants, from which the proof of Theorem \ref{main} will follow directly. The \emph{corescrew geodesic} is a typical class of special geodesics.

\begin{definition}[Corkscrew geodesic, \cite{Y2801}]
A \emph{corkscrew geodesic} refers to a geodesic within the homotopy class outlined in Figure \ref{fig:corkscrew}. This geodesic comprises a curve formed by the e concatenation of a simple arc and another that winds $k$-times around a boundary.
\end{definition}

\begin{figure}[htbp]
\centering
\tikzexternaldisable	
\begin{tikzpicture}[declare function={
	R = 42;
	f(\x) = 42.5-sqrt(R*R-\x*\x);
	ratio = 3;
}]
\def\x{2.5}
\draw [thick] (-4.5,2.8) arc (210:270:{5} and {5});
\draw [thick] (-4.5,-2.8) arc (150:90:{5} and {5});


\draw [thick] (-5,2.5) arc (30:-30:{5} and {5});

\draw[smooth, color=blue, thick] (-1.3,-0.4) 
    to [start angle=180, next angle=150] (-2.3, 0.8) coordinate (x);
\draw[smooth, color=blue, thick, dashed] (-2.3, 0.8) 
    to [start angle=150, next angle=90] (-4.33,0.5) coordinate (x);
\draw[smooth, color=blue, thick] (-4.33, 0.5) 
    to [start angle=90, next angle=-30] (-3,1.2) coordinate (x);
\draw[smooth, color=blue, thick, dashed] (x) 
    to [start angle=-30, next angle=20] (-1.7,-0.5) coordinate (x);
\draw[smooth, color=blue, thick] (x) 
    to [start angle=20, next angle=70] (-1.5,0) 
    to [next angle=0] (-1.3,0.4) coordinate (x);
\draw[smooth, color=blue, thick, dashed] (x) 
    to [start angle=0, next angle=270] (-1.1,0)
    to [next angle=180] (-1.3,-0.4) coordinate (x);

\end{tikzpicture}
\caption{\label{fig:corkscrew}
A corkscrew geodesic of $k=2$}
\end{figure}
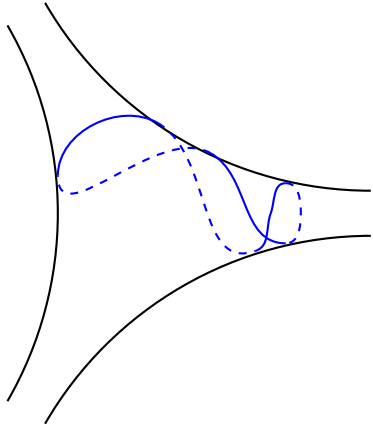

The thrice punctured sphere $\Sigma_{0,3}$ features a distinct hyperbolic structure, characterized by a corkscrew geodesic. This geodesic is formed by the combination of a simple arc and another arc that wraps around a cusp twice, resulting in 2 self-intersections. This can be seen in Figure 2, where the corkscrew geodesic has a length of $2\cosh^{-1}(5)=2\log(5+2\sqrt6)$.

If the geodesic $\Gamma$ is contained in a pair of pants, then we use the following results in \cite{B2795}. Assume a pair of pants $P$ with geodesic boundaries $\gamma_1,\gamma_2,\gamma_3$. For $i=1,2,3$.

 \begin{theorem}
 Let $\Gamma_{m,n}$ be the unique closed geodesic in $P$ that has homotopy type of a curve winding around $\gamma_1$ $m$ times and then winding around $\gamma_2$ $n$ times. The length of $\Gamma_{m,n}$ can be computed as follows:

 \begin{align}\label{length_pants}
  \cosh\frac{\ell(\Gamma_{m,n})}2=\frac{s_{1,m}}{s_1}\frac{s_{2,n}}{s_2}(c_3+c_1c_2)+c_{1,m}c_{2,n}
 \end{align}
 where
 $$ c_i=\cosh\frac{\ell(\gamma_i)}2\ \ \  c_{i,n}=\cosh\frac{n\ell(\gamma_i)}2\qquad s_i=\sinh\frac{\ell(\gamma_i)}2\ \ \  s_{i,n}=\sinh\frac{n\ell(\gamma_i)}2 $$
\end{theorem}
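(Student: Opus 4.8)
The plan is to transfer the statement to the holonomy representation and reduce it to a single trace computation. Present $\pi_1(P)$ as the free group $\langle x_1,x_2\rangle$, where $x_1,x_2$ are the based boundary loops around $\gamma_1,\gamma_2$; the free homotopy class of $\Gamma_{m,n}$ is then the conjugacy class of $x_1^{m}x_2^{n}$. Fix the Fuchsian holonomy $\rho\colon\pi_1(P)\to\mathrm{SL}(2,\R)$ and set $X_i=\rho(x_i)$. Each $X_i$ is hyperbolic, and for a hyperbolic $A\in\mathrm{SL}(2,\R)$ the length of the corresponding closed geodesic satisfies $\cosh\frac{\ell}{2}=\tfrac12\abs{\operatorname{tr}A}$. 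Normalizing the lift so that $\operatorname{tr}X_1=2c_1$ and $\operatorname{tr}X_2=2c_2$, the theorem reduces to verifying
\[
\tfrac12\operatorname{tr}\big(X_1^{\,m}X_2^{\,n}\big)=\frac{s_{1,m}}{s_1}\frac{s_{2,n}}{s_2}\big(c_3+c_1c_2\big)+c_{1,m}c_{2,n}.
\]

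First I would linearize the powers. By the Cayley–Hamilton theorem every $A\in\mathrm{SL}(2,\R)$ obeys $A^2=(\operatorname{tr}A)A-I$, and for a hyperbolic $A$ with $\tfrac12\operatorname{tr}A=\cosh\theta$ this iterates (a Chebyshev-type recursion) to
\[
A^{m}=\frac{\sinh(m\theta)}{\sinh\theta}\,A-\frac{\sinh((m-1)\theta)}{\sinh\theta}\,I.
\]
Applied to $X_1$ with $\theta=\ell(\gamma_1)/2$ and to $X_2$ with $\theta=\ell(\gamma_2)/2$, this expresses $X_1^{m}$ and $X_2^{n}$ as linear combinations of $\{X_1,I\}$ and $\{X_2,I\}$ with coefficients $s_{1,m}/s_1,\ s_{1,m-1}/s_1$ and $s_{2,n}/s_2,\ s_{2,n-1}/s_2$. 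I will also use the elementary identity $c_{i,m}=c_i\frac{s_{i,m}}{s_i}-\frac{s_{i,m-1}}{s_i}$, equivalent to $\cosh(m\theta)\sinh\theta=\sinh(m\theta)\cosh\theta-\sinh((m-1)\theta)$, in order to trade the shifted quantities $s_{i,m-1}$ for $c_{i,m}$.

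Multiplying the two linearized expressions and taking the trace reduces $\operatorname{tr}(X_1^{m}X_2^{n})$ to the three numbers $\operatorname{tr}X_1=2c_1$, $\operatorname{tr}X_2=2c_2$, and $\operatorname{tr}(X_1X_2)$. The last is evaluated by the Fricke relation $\operatorname{tr}(X_1X_2)+\operatorname{tr}(X_1X_2^{-1})=\operatorname{tr}X_1\operatorname{tr}X_2=4c_1c_2$, using that in a pair of pants the third boundary $\gamma_3$ is freely homotopic to $x_1x_2^{-1}$, so $\tfrac12\abs{\operatorname{tr}(X_1X_2^{-1})}=c_3$; this yields $\operatorname{tr}(X_1X_2)=4c_1c_2+2c_3$. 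Substituting this value, eliminating $s_{i,m-1}$ via the identity above, and collecting terms, the several $c_1c_2$-contributions recombine so that the coefficient of $\tfrac{s_{1,m}}{s_1}\tfrac{s_{2,n}}{s_2}$ is exactly $c_3+c_1c_2$, while the two cross terms $a_mc_1c_{2,n}$ and $a'_nc_2c_{1,m}$ cancel and the residue is $c_{1,m}c_{2,n}$. This is the asserted formula.

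The step I expect to be delicate is fixing signs. One must lift the three boundary holonomies to $\mathrm{SL}(2,\R)$ compatibly with an embedded pair of pants and identify $\gamma_3$ with $x_1x_2^{-1}$ rather than $x_1x_2$; these choices govern whether the third boundary enters as $\operatorname{tr}(X_1X_2^{-1})=-2c_3$, and hence whether the final constant is $c_3+c_1c_2$ or $c_3-c_1c_2$. The correct normalization can be pinned down on the thrice-punctured sphere $\bbh^2/\Gamma(2)$, where $X_1=\left(\begin{smallmatrix}1&2\\0&1\end{smallmatrix}\right)$ and $X_2=\left(\begin{smallmatrix}1&0\\2&1\end{smallmatrix}\right)$ give $\operatorname{tr}(X_1X_2^{-1})=-2$ and $\operatorname{tr}(X_1X_2)=6=4c_1c_2+2c_3$ with all $c_i=1$. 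This also provides a reassuring check: in the cusped limit the formula gives $\cosh\frac{\ell(\Gamma_{m,n})}{2}=2mn+1$, so for $\Gamma_{1,k}$ one recovers $2\cosh^{-1}(2k+1)$, matching the length of the corkscrew geodesic.
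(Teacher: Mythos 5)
Your argument is correct, and it is genuinely different from what the paper does: the paper does not prove this theorem at all but imports it from Baribaud \cite{B2795}, where it is obtained by hyperbolic trigonometry on the pair of pants (cutting along the common perpendiculars into right-angled hexagons and chasing the resulting trigonometric identities). Your route through the holonomy representation replaces that geometry with two algebraic inputs: the Cayley--Hamilton/Chebyshev linearization $A^{m}=\frac{\sinh(m\theta)}{\sinh\theta}A-\frac{\sinh((m-1)\theta)}{\sinh\theta}I$ and the Fricke identity $\operatorname{tr}(X_1X_2)+\operatorname{tr}(X_1X_2^{-1})=\operatorname{tr}X_1\operatorname{tr}X_2$. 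I checked the resulting bookkeeping: writing $a_m=s_{1,m}/s_1$, $b_n=s_{2,n}/s_2$, one gets $\tfrac12\operatorname{tr}(X_1^{m}X_2^{n})=a_mb_n(c_3+2c_1c_2)-a_mb_{n-1}c_1-a_{m-1}b_nc_2+a_{m-1}b_{n-1}$, and the tail equals $-a_mb_nc_1c_2+(a_mc_1-a_{m-1})(b_nc_2-b_{n-1})=-a_mb_nc_1c_2+c_{1,m}c_{2,n}$ by the subtraction formula you quote, which is exactly the claimed right-hand side; the $\Gamma(2)$ checks ($\operatorname{tr}(X_1X_2^{2})=10$, etc.) confirm the normalization. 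The one point you should make fully explicit is why $\operatorname{tr}(X_1X_2^{-1})=-2c_3$ holds for \emph{every} pair of pants and not just the thrice-punctured sphere: choose the $\mathrm{SL}(2,\R)$-lifts with $\operatorname{tr}X_i=2c_i>0$ continuously over the (connected) space of pants groups with varying boundary lengths; then $\operatorname{tr}(X_1X_2^{-1})$ is a continuous function of modulus $2c_3\geqslant 2$, hence never vanishes and has constant sign, which your example pins down as negative. With that sentence added, your proof is a clean, self-contained substitute for the citation, and it handles all $m,n$ and the cusped degenerations ($c_i\to 1$, $s_{i,m}/s_i\to m$) uniformly, which the hexagon computation does not do as transparently.
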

Note that when $P$ is an ideal pair of pants, i.e. some of $\gamma_1,\gamma_2,\gamma_3$ become cusps as their lengths tend to 0, then $c_i,c_{i,n},s_i,s_{i,n}$ can be defined, and if $\gamma_1$ becomes cusp, $\frac{s_{1,m}}{s_1}$ can be altered by $$\lim_{\ell(\gamma_1)\rightarrow0+}\frac{\sinh\frac{m\ell(\gamma_1)}2}{\sinh\frac{\ell(\gamma_1)}2}=m$$
in (\ref{length_pants}). 
And if $\gamma_2$ becomes cusp, similarly $\frac{s_{2,n}}{s_2}$ can be altered by $n$ in (\ref{length_pants}).

\begin{corollary}\label{pants2}
 Suppose $m,n$ are positive integers and $m+n\geqslant3$. $P$ is a pair of pants or an ideal pair of pants. $\Gamma_{m,n}$ is a closed geodesic in $P$ as above. Then
 $$\ell(\Gamma_{m,n})\geqslant2\log(5+2\sqrt6)$$
 Equality holds if and only if $P$ is a thrice punctured sphere and $(m,n)$ is equal to $(1,2)$ or $(2,1)$.
\end{corollary}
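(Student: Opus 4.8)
The plan is to reduce the inequality to the explicit length formula (\ref{length_pants}) and then minimize it by a monotonicity argument. First I would record that $2\log(5+2\sqrt6)=2\cosh^{-1}(5)$, so that the desired bound $\ell(\Gamma_{m,n})\geqslant 2\log(5+2\sqrt6)$ is equivalent to $\cosh\frac{\ell(\Gamma_{m,n})}2\geqslant 5$; by (\ref{length_pants}) this amounts to showing
$$F:=\frac{s_{1,m}}{s_1}\frac{s_{2,n}}{s_2}(c_3+c_1c_2)+c_{1,m}c_{2,n}\geqslant 5,$$
where I set $x_i=\ell(\gamma_i)/2\geqslant0$ and regard $F$ as a function of $(x_1,x_2,x_3)\in[0,\infty)^3$ and of the integers $m,n\geqslant 1$, the value $x_i=0$ corresponding to a cusp and interpreted via the limits recorded after (\ref{length_pants}). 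Since every triple of non-negative boundary lengths is realized by a unique (possibly ideal) pair of pants, the admissible parameter region is exactly the product $[0,\infty)^3$, so I may minimize one coordinate at a time.

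The heart of the argument is that $F$ is non-decreasing in each $x_i$, and strictly increasing once $x_i>0$. The factors $c_3=\cosh x_3$, $c_1c_2=\cosh x_1\cosh x_2$ and $c_{1,m}c_{2,n}=\cosh(mx_1)\cosh(nx_2)$ are manifestly non-negative and non-decreasing in each $x_i$. The only factor needing an argument is $\frac{s_{i,n}}{s_i}=\frac{\sinh(nx_i)}{\sinh x_i}$; here I would use the identity
$$\frac{\sinh(nx)}{\sinh x}=\sum_{j=0}^{n-1}e^{(2j-n+1)x},$$
whose exponents are symmetric about $0$, so that its derivative $\sum_{j=0}^{n-1}(2j-n+1)e^{(2j-n+1)x}=\sum_{k>0}k\bigl(e^{kx}-e^{-kx}\bigr)$ is non-negative for $x\geqslant0$. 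Thus $\frac{s_{i,n}}{s_i}$ is non-decreasing, and $F$, being a sum of products of non-negative non-decreasing functions, is non-decreasing in each $x_i$. For strictness I note that $c_{1,m}c_{2,n}=\cosh(mx_1)\cosh(nx_2)$ is already strictly increasing in $x_1$ (resp. $x_2$) for $x_1>0$ (resp. $x_2>0$) for every $m,n\geqslant1$, and that the coefficient $\frac{s_{1,m}}{s_1}\frac{s_{2,n}}{s_2}$ multiplying $c_3$ is strictly positive, making $F$ strictly increasing in $x_3>0$. Consequently the minimum of $F$ over $[0,\infty)^3$ is attained, uniquely, at $x_1=x_2=x_3=0$, i.e. on the thrice-punctured sphere.

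It then remains to evaluate $F$ there and minimize over the integers. Using the cusp limits $\frac{s_{i,n}}{s_i}\to n$ and $c_i,c_{i,n}\to1$ as $x_i\to0$, I get $F(0,0,0)=mn(1+1)+1=2mn+1$, so $F\geqslant 2mn+1$ with equality iff all three boundaries are cusps. Finally, over positive integers with $m+n\geqslant3$ the product $mn$ is minimized (equal to $2$) exactly at $(m,n)=(1,2)$ and $(2,1)$, giving $2mn+1=5$, whereas every other admissible pair has $mn\geqslant3$ and hence $F\geqslant7$. Combining, $\ell(\Gamma_{m,n})\geqslant 2\cosh^{-1}(5)=2\log(5+2\sqrt6)$, with equality precisely when $P$ is a thrice-punctured sphere and $(m,n)\in\{(1,2),(2,1)\}$. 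The one genuinely delicate point is the monotonicity of $\frac{\sinh(nx)}{\sinh x}$; the exponential-sum identity reduces it to a transparent cancellation, after which the remainder is bookkeeping.
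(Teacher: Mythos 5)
Your proof is correct and follows essentially the same route as the paper: both reduce to the formula (\ref{length_pants}), use the expansion $\frac{\sinh(nx)}{\sinh x}=\sum_j e^{(2j-n+1)x}$ to bound that factor below by $n$ (with equality only in the cusp limit), and conclude $\cosh\frac{\ell(\Gamma_{m,n})}{2}\geqslant 2mn+1\geqslant 5$. Your repackaging of the pointwise lower bounds as monotonicity in each boundary length is a cosmetic difference, not a different argument.
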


\begin{proof}
 Since $c_i=\cosh\frac{\ell(\gamma_i)}2\geqslant1$ and similarly $c_{i,m}\geqslant1$, and when $\ell(\gamma_1)>0$
 \begin{align*}
 \frac{s_{1,m}}{s_1}&=\frac{\sinh\frac{m\ell(\gamma_1)}2}{\sinh\frac{\ell(\gamma_1)}2}=\frac{\alpha^{m}-\alpha^{-m}}{\alpha-\alpha^{-1}}=\sum_{k=0}^{m-1}\alpha^{m-1-2k}\\
 &=\frac12\sum_{k=0}^{m-1}(\alpha^{m-1-2k}+\alpha^{2k+1-m})>m
 \end{align*}
 for $\alpha=e^{\ell(\gamma_1)/2}$. Similarly, when $\ell(\gamma_2)>0$, we have $\frac{s_{2,n}}{s_2}>n$. Hence,we have
 $$\cosh\frac{\ell(\Gamma_{m,n})}2=\frac{s_{1,m}}{s_1}\frac{s_{2,n}}{s_2}(c_3+c_1c_2)+c_{1,m}c_{2,n}\geqslant2mn+1\geqslant5$$
 Hence $\ell(\Gamma_{m,n})\geqslant2\log(5+2\sqrt6)$. The equality holds if and only if $\frac{s_{1,m}}{s_1}=m$, $\frac{s_{2,n}}{s_2}=n$, and $c_3=c_1=c_2=c_{1,m}=c_{2,n}=1$, that is, $\ell(\gamma_1)=\ell(\gamma_2)=\ell(\gamma_3)=0$, and $mn=2$.
\end{proof}

\section{Winding number of geodesic segments in a collar}\label{winding_number}

In this section, we define the \emph{winding number} for geodesics within a collar and a cusp, which establishes a relationship between the number of self-intersections of the geodesic and its length. Let $c\in\Sigma$ be a simple closed geodesic, and $w>0$. This section we always assume the collar
$$N(c)=\set{x\in\Sigma:d(x,c)< w}$$
is an embedded annulus. For cusps, assume $c$ is a puncture and $N(c)$ is defined as the cusp neighborhood with boundary horocycle of length 4. Suppose $\delta$ represents a geodesic segment in $N(c)$ with endpoints $x_1$ and $x_2$ on the same component of $\partial N(c)$.

 We define the \emph{winding number} $W(\delta)$ of the arc $\delta$ as follows. The definitions are similar to \cite{Y2805}.

\begin{enumerate}
\item
When $c$ is a closed geodesic, every point of $\delta$ orthogonally projects to its nearest point on $c$. The winding number of $\delta$ can be calculated as the length of the projection of $\delta$ divided by the length of $c$.

\item
Similarly, when $c$ is a cusp, every point of $\delta$ is orthogonally projects to its nearest point on the length $h$ horocycle, here $h>0$ sufficiently small. The winding number of $\delta$ is the length of the projection of $\delta$ divided by $h$. Note that the definition is independent of $h$.
\end{enumerate}

\begin{lemma}\label{winding001}
 If $c$ is a closed geodesic, then 
$$\ell(\delta)=2\sinh^{-1}\bracket{\sinh\frac{W(\delta)\ell(c)}2\cdot\cosh{w}}$$
\end{lemma}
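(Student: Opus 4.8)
The plan is to lift everything to the universal cover and reduce the claim to a single hyperbolic distance computation. I would work in the upper half-plane model of $\bbh^2$ and arrange the lift of $c$ so that its axis is the positive imaginary axis; the associated deck transformation is then $g\colon z\mapsto e^{\ell(c)}z$, whose translation length along the axis is exactly $\ell(c)$. The arc $\delta$ lifts to a geodesic segment $\tilde\delta$, and the whole statement will follow from locating its endpoints and computing the distance between them.

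First I would describe the collar in these coordinates. Integrating $d\phi/\sin\phi$ along the perpendicular semicircle centered at the origin shows that the point $te^{i\phi}$ lies at distance $d$ from the imaginary axis with $\cosh d = 1/\sin\phi$; in particular the equidistant loci are Euclidean rays through the origin. Hence the boundary component of $N(c)$ at distance $w$ lifts to the ray at angle $\alpha$ with $\sin\alpha = 1/\cosh w$. This ray is invariant under $g$, so it is precisely the full lift of one boundary circle of the annulus $N(c)$. Since both endpoints of $\delta$ lie on a single boundary component, $\tilde\delta$ has both endpoints on this ray, and I may write them as $z_1=t_1e^{i\alpha}$ and $z_2=t_2e^{i\alpha}$ with, say, $t_2>t_1$.

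Next I would match the winding number to the ratio $t_2/t_1$. The nearest-point projection onto the axis sends $te^{i\phi}$ to $it$, because the geodesics perpendicular to the imaginary axis are the semicircles centered at the origin. I would verify that this projection is monotone along any geodesic: writing a lift as a Euclidean semicircle of center $m$ and radius $R$, one has $|P|^2=m^2+R^2+2mR\cos\psi$ in the angular parameter $\psi$, which is monotone. Consequently the projection of $\tilde\delta$ covers the sub-arc of the axis from $it_1$ to $it_2$ exactly once, of hyperbolic length $\log(t_2/t_1)$, so that by definition $W(\delta)\ell(c)=\log(t_2/t_1)=:u$.

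It then remains to compute $\ell(\delta)=d(z_1,z_2)$ and simplify. Using $\cosh d(z_1,z_2)=1+\frac{|z_1-z_2|^2}{2\,\mathrm{Im}(z_1)\,\mathrm{Im}(z_2)}$, the common factor $e^{i\alpha}$ gives $|z_1-z_2|^2=(t_1-t_2)^2$ and $\mathrm{Im}(z_1)\mathrm{Im}(z_2)=t_1t_2\sin^2\alpha$, whence
\begin{equation*}
\cosh d=1+\frac{(t_1-t_2)^2}{2t_1t_2\sin^2\alpha}=1+\frac{\cosh u-1}{\sin^2\alpha}=1+(\cosh u-1)\cosh^2 w,
\end{equation*}
using $\frac{(t_1-t_2)^2}{2t_1t_2}=\cosh(\log(t_2/t_1))-1$ and $\sin^2\alpha=1/\cosh^2 w$. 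The half-angle identity $\cosh x-1=2\sinh^2(x/2)$ then reduces this to $\sinh^2(d/2)=\sinh^2(u/2)\cosh^2 w$, i.e. $d=2\sinh^{-1}\bracket{\sinh\frac{u}{2}\cosh w}$, which is the claimed formula. The computation itself is routine; the step demanding the most care is the identification of the collar-boundary angle through $\cosh w=1/\sin\alpha$ together with the monotonicity of the nearest-point projection, which is what makes the geometric definition of the winding number genuinely translate into $\log(t_2/t_1)=W(\delta)\ell(c)$.
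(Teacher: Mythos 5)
Your proof is correct and follows essentially the same route as the paper: lift to the universal cover, drop perpendiculars from the endpoints of $\tilde\delta$ to the axis of $c$, identify the distance between the feet with $W(\delta)\ell(c)$, and invoke the Saccheri/Lambert quadrilateral relation $\sinh(\ell(\delta)/2)=\sinh(W(\delta)\ell(c)/2)\cosh w$ --- you merely derive that relation by an explicit upper half-plane computation instead of citing it. As a small bonus, your monotonicity argument for the nearest-point projection justifies the identity $d(\tilde y_1,\tilde y_2)=W(\delta)\ell(c)$, which the paper's proof asserts without comment.
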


\begin{proof}
The universal covering $p:\bbh^2\to\Sigma$ from the Poincar\'e disk $\bbh^2$ to $\Sigma$ is locally isometric. Let $\widetilde{\delta}$, $\widetilde{c}$ be a lift of $\delta$ and $c$. The connected component $\widetilde{N(c)}$ of $p^{-1}(N(c))$ containing $\widetilde{\delta}$ is a universal cover of the annulus $N(c)$. 
Let $\widetilde{x}_1$ and $\widetilde{x}_2$ be lifts of $x_1$,$x_2$ in $\widetilde{N(c)}$. Let $\widetilde{\eta}_1$ and $\widetilde{\eta}_2$ be the shortest geodesics from $\widetilde{x}_1$ and $\widetilde{x}_2$ to $\widetilde{c}$ respectively. Then $\eta_1:=p\circ\widetilde{\eta}_1$, $\eta_2=p\circ\widetilde{\eta}_2$ is a geodesic connecting $x$ and $c$ and $\widetilde{\eta}_1$ and $\widetilde{\eta}_2$ are both perpendicular to $c$. Let $\widetilde{y}_1$ and $\widetilde{y}_2$ be the two feet. Without loss of generality, we may assume $\widetilde{c}$ is the horizontal diameter of $\bbh^2$ and the origin $O$ is the middle point of $\widetilde{y}_1$ and $\widetilde{y}_2$, as illustrated in Figure \ref{fig:boundary_length_annulus}.

The geodesic $\widetilde{\delta}$ between $\widetilde{x}_1$ and $\widetilde{x}_2$, $\widetilde{\eta}_1$, $\widetilde{\eta}_2$ and the geodesic $\delta_1$ between $\widetilde{y}_1$ and $\widetilde{y}_2$ form a Saccheri quadrilateral, and half of it is a Lambert quadrilateral. Note that $d(\widetilde{y}_1, \widetilde{y}_2)=W(\delta)\ell(c)$. The property of the Lambert quadrilateral gives
\begin{equation*}\label{eqn:ends_annulus_length_1}
\sinh\bracket{\frac{\ell(\delta)}2}=\sinh\bracket{\frac{W(\delta)\ell(c)}2}\cosh\bracket{\ell\bracket{\widetilde{\eta}_1}}.
\end{equation*}
\end{proof}

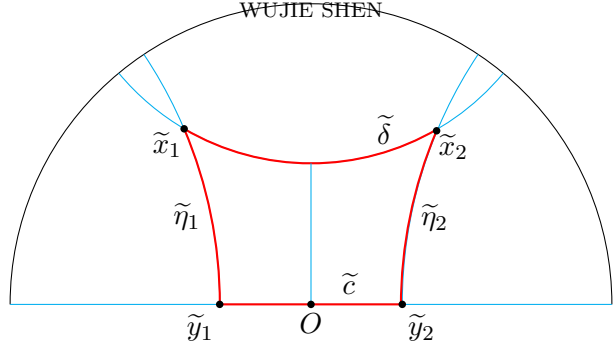
\begin{figure}[htbp]
\centering
\tikzexternaldisable
\begin{tikzpicture}
\def\x{4}
\coordinate (O) at (0,0);
\coordinate (P) at (-1.8*\x,0);
\coordinate (Q) at (1.8*\x,0);
\coordinate (K) at (0, 1.3*\x);

\begin{pgfinterruptboundingbox}
\draw[name path=circle_O, color=black] (4,0) arc (0:180:4);
\path[name path=PQ] (P) to (Q);
\coordinate[name intersections={of=circle_O and PQ, by={N,M}}];

\path[name path=circle_P] let
  \p1 = ($ (M) - (P) $),  \n1 = {veclen(\x1,\y1)},
  \p2 = ($ (N) - (P) $),  \n2 = {veclen(\x2,\y2)} in
  (P) circle ({sqrt(\n1)*sqrt(\n2)});
\coordinate[name intersections={of=circle_P and PQ, by={A}}];
\coordinate[name intersections={of=circle_P and circle_O, by={G,S}}];

\path[name path=circle_Q] let
  \p1 = ($ (M) - (Q) $),  \n1 = {veclen(\x1,\y1)},
  \p2 = ($ (N) - (Q) $),  \n2 = {veclen(\x2,\y2)} in
  (Q) circle ({sqrt(\n1)*sqrt(\n2)});
\coordinate[name intersections={of=circle_Q and PQ, by={B}}];
\coordinate[name intersections={of=circle_Q and circle_O, by={H,T}}];

\path[name path=circle_K] (K) circle ({sqrt(0.3*2.3)*\x});
\coordinate[name intersections={of=circle_P and circle_K, by={X,C}}];
\coordinate[name intersections={of=circle_Q and circle_K, by={X,D}}];
\coordinate[name intersections={of=circle_O and circle_K, by={E,F}}];

\end{pgfinterruptboundingbox}

\draw[color=cyan, thin] (180:\x) to (0:\x);

\pgfmathanglebetweenpoints{\pgfpointanchor{P}{center}}{\pgfpointanchor{G}{center}}
\pgfmathsetmacro{\anglePG}{\pgfmathresult}
\draw[name path=arc_GS, color=cyan, thin]
  let \p1 = ($ (G) - (P) $),  \n1 = {veclen(\x1,\y1)} in
  (G) arc (\anglePG:0:\n1);
\draw[name path=HT, color=cyan, thin]
  let \p1 = ($ (G) - (P) $),  \n1 = {veclen(\x1,\y1)} in
  (H) arc ({180-\anglePG}:{180}:\n1);
 
\pgfmathanglebetweenpoints{\pgfpointanchor{K}{center}}{\pgfpointanchor{E}{center}}
\pgfmathsetmacro{\angleKE}{\pgfmathresult}
\draw[name path=arc_EF, color=cyan, thin]
  (E) arc (\angleKE:540-\angleKE:{sqrt(0.3*2.3)*\x});

\pgfmathanglebetweenpoints{\pgfpointanchor{K}{center}}{\pgfpointanchor{C}{center}}
\pgfmathsetmacro{\angleKC}{\pgfmathresult}
\draw[name path=arc_CD, color=red, thick]
  (C) arc (\angleKC:540-\angleKC:{sqrt(0.3*2.3)*\x});

\pgfmathanglebetweenpoints{\pgfpointanchor{P}{center}}{\pgfpointanchor{C}{center}}
\pgfmathsetmacro{\anglePC}{\pgfmathresult}
\draw[name path=arc_AC, color=red, thick]
  let \p1 = ($ (C) - (P) $),  \n1 = {veclen(\x1,\y1)} in
  (C) arc (\anglePC:0:\n1);
\draw[name path=arc_BD, color=red, thick]
  let \p1 = ($ (G) - (P) $),  \n1 = {veclen(\x1,\y1)} in
  (1.67,2.31) arc ({180-\anglePC}:180:\n1);

\draw[color=red, thick] (A) to (B);
\draw[color=cyan] (O) to (0,{(1.3-sqrt(0.3*2.3))*\x});

\node[above] at (0.5,0) {$\widetilde{c}$};
\node[above=0.1, anchor=center] at ($(0,1.25)!0.3!(D)$) {$\widetilde\delta$};
\node[left=0.2, anchor=center] at ($(A)!0.5!(C)$) {$\widetilde{\eta}_1$};
\node[right=0.2, anchor=center] at ($(B)!0.5!(1.67,2.31)$) {$\widetilde{\eta}_2$};


\path (A) node[circle, fill, inner sep=1pt, label={[shift={(235:0.45)}, anchor=center]{$\widetilde{y}_1$}}]{};
\path (B) node[circle, fill, inner sep=1pt, label={[shift={(305:0.45)}, anchor=center]{$\widetilde{y}_2$}}]{};
\path (C) node[circle, fill, inner sep=1pt, label={[shift={(230:0.35)}, anchor=center]{$\widetilde{x}_1$}}]{};
\path (1.67,2.31) node[circle, fill, inner sep=1pt, label={[shift={(310:0.35)}, anchor=center]{$\widetilde{x}_2$}}]{};

\path (O) node[circle, fill, inner sep=1pt, label={[shift={(270:0.3)}, anchor=center]{$O$}}]{};

\end{tikzpicture}
\caption{\label{fig:boundary_length_annulus}
A covering of the annulus.}
\end{figure}

\begin{lemma}\label{winding002}
 If $c$ is a cusp with boundary length 4(it is embedded by Lemma \ref{lemma:cusp_maximal_area}), then 
 $$\ell(\delta)=2\log\bracket{2W(\delta)+\sqrt{4W^2(\delta)+1}}$$
\end{lemma}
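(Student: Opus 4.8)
The plan is to work in the upper half-plane model of $\bbh^2$ and reduce the statement to an elementary computation about a single lifted geodesic arc. First I would normalize the parabolic generator of the cyclic group $\Gamma$ so that it is $z\mapsto z+L$ for some $L>0$; then the horocycles of the cusp are exactly the horizontal lines $y=\text{const}$, and the horocycle at Euclidean height $t$ descends to a circle of hyperbolic length $L/t$. In particular the boundary horocycle of $N(c)$, having length $4$, sits at height $t_0=L/4$, while the auxiliary length-$h$ horocycle used to define $W(\delta)$ sits at height $t_h=L/h$.

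Next I would lift $\delta$ to a geodesic $\tilde\delta\subseteq\bbh^2$; it is an arc of a Euclidean semicircle orthogonal to the real axis, say with center $m$ on the real axis and radius $\rho$, and it consists of the portion of this semicircle lying above $y=t_0$ (its two endpoints $\tilde x_1,\tilde x_2$ are the intersections of the semicircle with $y=t_0$). Since orthogonal projection onto a horizontal horocycle is the vertical map $x+iy\mapsto x+it_h$, the projection of $\tilde\delta$ is the horizontal segment spanning the $x$-extent of $\tilde\delta$. A short computation shows this extent has Euclidean width $2\sqrt{\rho^2-t_0^2}$, so the projection has hyperbolic length $2\sqrt{\rho^2-t_0^2}/t_h$; dividing by $h=L/t_h$ yields
$$W(\delta)=\frac{2\sqrt{\rho^2-t_0^2}}{L},$$
which in particular is independent of $h$, as asserted in the definition.

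It then remains to express $\ell(\delta)$, the geodesic distance between $\tilde x_1$ and $\tilde x_2$, in terms of $W(\delta)$. I would use that the two endpoints lie on the horocycle $y=t_0$ at horizontal distance $2\sqrt{\rho^2-t_0^2}$ apart, so the arc of this horocycle joining them has hyperbolic length $2\sqrt{\rho^2-t_0^2}/t_0=4W(\delta)$ (using $t_0=L/4$). The standard relation between the horocyclic arc length $\sigma$ joining two points of a horocycle and their geodesic distance $d$, namely $\sinh(d/2)=\sigma/2$, which follows at once from $\cosh d=1+|\tilde x_1-\tilde x_2|^2/(2t_0^2)$, then gives $\sinh(\ell(\delta)/2)=2W(\delta)$, whence $\ell(\delta)=2\sinh^{-1}(2W(\delta))=2\log\bracket{2W(\delta)+\sqrt{4W^2(\delta)+1}}$. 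Alternatively one can compute $\ell(\delta)$ directly by integrating $ds=d\theta/\sin\theta$ along the semicircle between the angles $\theta_1=\arcsin(t_0/\rho)$ and $\pi-\theta_1$ and simplifying the resulting logarithm.

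The computation is essentially routine once the model is fixed; the only point requiring genuine care is the bookkeeping of the two distinct horocycles, at height $t_0$ for the boundary of $N(c)$ and at height $t_h$ for the definition of $W$, together with checking that the final length is independent of $h$ and relies on the normalization that the boundary horocycle has length exactly $4$ (which is what converts the horocyclic gap $\sigma$ into $4W(\delta)$ and thus produces the factor $2W(\delta)$). I do not expect any obstacle beyond keeping these normalizations straight.
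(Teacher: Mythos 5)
Your argument is correct and is essentially the paper's own proof: both work in the upper half-plane, lift $\delta$ to the portion of a semicircle above the boundary horocycle, identify the Euclidean horizontal separation of the endpoints with $4W(\delta)$ times the height of that horocycle, and apply the distance formula $\cosh d = 1 + |\Delta x|^2/(2t_0^2)$ (the paper simply normalizes $L=4$, $t_0=1$ from the start). Your extra bookkeeping with the auxiliary height-$t_h$ horocycle and the verification that $W$ is independent of $h$ is a welcome clarification but not a different method.
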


\begin{proof}
When $c$ is a cusp, consider the projection map $p$ from the upper half plane model of $\bbh^2$ to $\Sigma$, where $p^{-1}(N(c))=\{(x,y)\in\bbh^2:y\geqslant 1\}$, and $A(-2,0)$ and $B(2,0)$ be two adjacent points of a same point $x\in\partial N(c)$, $\ell$ be the line $\{(x,y)\in\bbh^2:y=1\}$. Without loss of generality assume $\widetilde{\delta}$ is a lift of the arc $\delta$ and is an arc of a circle centered at the origin $O$ with endpoints $P_1,P_2\in\ell$. The hyperbolic length of the arc $P_1P_2$ is $d(P_1,P_2)=\ell(\delta)$, and $d_E(P_1,P_2)=4W(\delta)$ where $d_E$ means the Euclidean distance in $\bbh^2$, as in Figure \ref{fig:nbhd_cusps222} .Hence 
$$\ell(\delta)=d(P_1,P_2)=2\log\bracket{2W(\delta)+\sqrt{4W^2(\delta)+1}}$$
\end{proof}

\begin{figure}[htbp]
\begin{center}
\tikzexternaldisable
\begin{tikzpicture}
\def\x{1.5}

\draw[thick] (-1,0) to (-1,5);
\draw[thick] (1,0) to (1,5);

\draw[dashed] (4,0) arc (0:180:4);
\draw[dashed] (-4.5,1) to (4.5,1);

\draw[dashed] (-3*\x,0) to (3*\x,0);

\draw[thick, color=red] (3.873,1) arc (14.4775:165.5225:4);

\path (0,0) node[circle, fill, inner sep=1pt, label={[shift={(270:0.45)}, anchor=center]{$O$}}]{};
\path (-1,1) node[circle, fill, inner sep=1pt, label={[shift={(135:0.45)}, anchor=center]{$A$}}]{};
 \path (1,1) node[circle, fill, inner sep=1pt, label={[shift={(45:0.45)}, anchor=center]{$B$}}]{};
\path (-3.873,1) node[circle, fill, inner sep=1pt, label={[shift={(135:0.45)}, anchor=center]{$P_1$}}]{};
 \path (3.873,1) node[circle, fill, inner sep=1pt, label={[shift={(45:0.45)}, anchor=center]{$P_2$}}]{};
\path (3,3) node[circle, inner sep=1pt, label={[shift={(135:0.45)}, anchor=center]{$\widetilde\delta$}}]{};

\draw (0,5) node[above] {$\infty$};

	  			
\end{tikzpicture}
\end{center}
\tikzexternalenable
\caption{\label{fig:nbhd_cusps222}
A covering of $N_0(c_i)$}
\end{figure}

\section{Proof of the main theorem}\label{proof_main}

The notations are same as above, $\Gamma$ is a closed geodesic in $\Sigma$ with at least 2 self intersections, with length $\ell(\Gamma)$. Let $|\Gamma\cap\Gamma|$ be the self intersection number of $\Gamma$. Let $\gamma$ be the shortest arc contained in $\Gamma$ that is a closed curve in $\Sigma$, and $P\in\Sigma$ be its endpoint. Then $\gamma$ is simple since the length of $\delta_0$ is minimal. We finish the proof based on whether the length $\ell(\gamma)$ of $\gamma$ satisfies
$$\ell(\gamma)\leqslant2\log(5+2\sqrt{6})-4\log(1+\sqrt2)<1.06.$$
Briefly, if $\ell(\gamma) \geqslant 1.06$, then the remaining part $\Gamma \setminus \gamma$ is freely homotopic to a simple closed geodesic, and thus $\Gamma$ has at most one self-intersection. Otherwise, we can prove that $\Gamma$ lies in a pair of pants, and the conclusion can be derived using the results from Section \ref{geodesic_pants}. The detailed proof is presented below.

\subsection{Case 1: $\ell(\gamma)\geqslant1.06$}

\begin{theorem}\label{LG106}
 If $\ell(\gamma)\geqslant1.06$ and $\ell(\Gamma)<2\log(5+2\sqrt{6})$, then $|\Gamma\cap\Gamma|\leqslant1$.
\end{theorem}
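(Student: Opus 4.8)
The plan is to reduce Case 1 to the pair-of-pants estimate of Corollary \ref{pants2}. Write $\delta:=\Gamma\setminus\gamma$ for the complementary geodesic arc, so that $\gamma$ and $\delta$ are the two geodesic loops into which $\Gamma$ splits at its double point $P$. The first step is purely arithmetic: since $\ell(\gamma)\geqslant 1.06 > 2\log(5+2\sqrt6)-4\log(1+\sqrt2)$ and $\ell(\Gamma)<2\log(5+2\sqrt6)$, we obtain
$$\ell(\delta)=\ell(\Gamma)-\ell(\gamma)<2\log(5+2\sqrt6)-1.06<4\log(1+\sqrt2).$$
By Yamada's sharp bound \cite{Y2799}, $4\log(1+\sqrt2)=2\cosh^{-1}(3)$ is a lower bound for the length of any nonsimple closed geodesic; hence the closed geodesic $\sigma$ freely homotopic to $\delta$ satisfies $\ell(\sigma)\leqslant\ell(\delta)<2\cosh^{-1}(3)$ and is therefore simple (a simple closed geodesic, a power of one, or a loop around a cusp). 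Since $\gamma$ is simple by its minimality, its free homotopy class is simple as well.

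The heart of the argument is to upgrade this to the statement that $\Gamma$ is contained in a pair of pants (possibly ideal) $P\subseteq\Sigma$. I would proceed as follows. The loops $\gamma$ and $\delta$ meet at $P$, where $\Gamma$ crosses itself transversally; reading the four half-edges of the figure-eight $\gamma\cup\delta$ around $P$ in cyclic order, transversality forces the pattern to be non-interleaved (the two ends of $\gamma$ are cyclically adjacent, as are those of $\delta$), so that a regular neighborhood of an embedded $\gamma\cup\delta$ is a pair of pants ($\chi=-1$, three boundary components) rather than a one-holed torus. Combining the non-interleaving at $P$ with the simplicity of $\gamma$ and of $\delta$'s straightening $\sigma$, and using the length-minimality of $\gamma$ to exclude spurious secondary intersections of the two loops away from $P$, the minimal subsurface filled by $\gamma$ and $\delta$ is a pair of pants. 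As $\Gamma=\gamma\delta$ lies in this incompressible subsurface, the geodesic $\Gamma$ is contained in its geodesic realization $P$.

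Once $\Gamma\subseteq P$, the conclusion follows from Section \ref{geodesic_pants}. Every closed geodesic in a pair of pants is either a boundary curve (simple, so $|\Gamma\cap\Gamma|=0$), the figure-eight $\Gamma_{1,1}$ (with $|\Gamma\cap\Gamma|=1$), or a winding geodesic $\Gamma_{m,n}$ with $m+n\geqslant 3$. In the last case Corollary \ref{pants2} gives $\ell(\Gamma)\geqslant 2\log(5+2\sqrt6)$, contradicting the hypothesis. Hence $\Gamma$ is of one of the first two types, and $|\Gamma\cap\Gamma|\leqslant 1$.

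I expect the main obstacle to be the middle step—rigorously establishing that $\Gamma$ lies in a pair of pants. The delicate points are handling the degenerate possibilities that $\gamma$ or $\delta$ is a proper power or a cusp loop (each yielding an ideal pair of pants, still covered by Corollary \ref{pants2}, or a multiple of a single simple closed geodesic, in which case $\Gamma$ is already simple), and controlling the intersections between $\gamma$ and $\delta$ away from $P$: one must exploit simultaneously the transverse, non-interleaved crossing at $P$ and the length minimality of $\gamma$ to guarantee that the filled subsurface does not grow beyond a pair of pants.
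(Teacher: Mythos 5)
Your first step—splitting $\Gamma$ at $P$ into the loops $\gamma$ and $\delta$, bounding $\ell(\delta)<4\log(1+\sqrt2)$, and invoking Yamada's bound to conclude that each loop is freely homotopic to a multiple of a simple closed geodesic or a cusp—coincides with the paper's opening move, and your observation that the two sub-loops of a geodesic split at a self-crossing meet non-interleaved at $P$ is correct. The gap is the middle step, and it is not merely a deferred technicality: the subsurface filled by $\gamma\cup\delta$ is \emph{not} in general a pair of pants, and no use of the minimality of $\gamma$ or of the local picture at $P$ will make it one. Let $\beta$ and $\beta'$ be the simple closed geodesics (or cusps) carrying the free homotopy classes of $\gamma$ and $\delta$. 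The case $\beta\cap\beta'\neq\emptyset$ is perfectly compatible with all of your hypotheses, since Yamada's theorem controls each class separately and says nothing about how the two geodesics sit relative to one another. In that case $\Gamma$ cannot lie in any pair of pants (distinct boundary-parallel geodesics in a pair of pants are disjoint), so the reduction to Corollary \ref{pants2} is unavailable and your argument has nowhere to go.

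The paper's proof splits into exactly these two cases. When $\beta\cap\beta'=\emptyset$ it builds a pair of pants with $\beta$ and $\beta'$ as two of the boundary components and argues essentially as you propose, including the degenerate case where $\delta$ is homotopic to a proper power $(\beta')^k$, which realizes $\Gamma$ as $\Gamma_{1,k}$ and gives the contradiction for $k\geqslant2$ via Corollary \ref{pants2}. When $\beta\cap\beta'\neq\emptyset$ it abandons the topological reduction entirely and runs a quantitative estimate: any arc of $\beta'$ crossing the collar of $\beta$ forces $\ell(\gamma')\geqslant 2w(\ell(\beta))$, the winding-number and Lambert-quadrilateral computation of Lemma \ref{winding001} bounds the remaining length of $\Gamma$ from below in terms of $t=\tfrac12\ell(\beta)$, and minimizing the resulting function $H(T)$ over $T>2$ yields $\ell(\Gamma)>2\log(5+2\sqrt6)$, the desired contradiction. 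Some argument of this second kind is indispensable; without it the proof is incomplete.
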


\begin{proof}
 Let $\gamma'=\Gamma\setminus\gamma$, then $\gamma'$ is a geodesic arc which closed up and $\ell(\gamma')<4\log(1+\sqrt2)$. 
 Using Yamada's result(\cite{Y2799}), that is the infimum length of nonsimple closed geodesic on hyperbolic surfaces is $4\log(1+\sqrt2)$, we have $\gamma$ and $\gamma'$ are freely homotopic to a multiple of simple closed geodesics(or cusp) $\beta$ and $\beta'$, and
$$\ell(\beta)\leqslant\ell(\gamma)\qquad\ell(\beta')\leqslant\ell(\gamma')<4\log(1+\sqrt2)<3.5255$$
Using the collar lemma(Lemma \ref{collar}) we have $N(\beta)$ is an embedded annulus.
\begin{enumerate}
\item 

If $\beta\cap\beta'=\emptyset$, then since $\gamma'$ freely homotopic to a multiple of a simple closed geodesic $\beta'$ with multiplicity $k\in\mathbb{Z}_+$, denoted as $(\beta')^k$. We can conclude that $\gamma'$ is homotopic to a curve with basepoint $P$ with self-intersection number $k-1$, assume $\gamma''$ is the curve of minimal length satisfying this property. Then $\gamma''$ is a geodesic arc with the same endpoint as $P$, and it is freely homotopic to a multiple of $\beta'$ and the union of $\gamma''$ and $\beta'$ could not create bigons. We can also conclude that $\gamma''\cap\beta'=\emptyset$. Hence $\Gamma$ is freely homotopic to a curve starting at $P$, winding around $\beta$ once, then moving to $\beta'$ and winding around it $k$ times, before finally returning to $P$. This curve has a self-intersection number of $k$.

We can define a pair of pants $P$ with $\beta$ and $\beta'$ as two of three boundary components in the following manner. Connect $P$ and $\beta$ using the shortest geodesic $\delta_1$ in the annulus bounded by $\beta$ and $\gamma$, and connect $P$ and $\beta'$ using the shortest geodesic $\delta_2$ in the annulus bounded by $\beta'$ and a portion of $\gamma'$. Consider a closed curve $\beta_1:=\delta_1\beta\delta_1^{-1}\delta_2\beta'\delta_2^{-1}$ (by connecting the endpoints) we can establish an orientation of $\beta$ and $\beta'$ such that $\beta_1$ is freely homotopic to a simple closed geodesic(or cusp) $\beta''$, which serves as the third boundary component of $P$. As the homotopy type of $\Gamma$ can be represented by a closed geodesic $\Gamma'$ contained in $P$, the uniqueness of closed geodesics with a given homotopy type implies that $\Gamma=\Gamma'\subseteq P$. Using Corollary \ref{pants2} we have $\ell(\Gamma)\geqslant2\log(5+2\sqrt6)$.

\item If $\beta\cap\beta'\neq\emptyset$, then $\beta$ is a simple closed geodesic, not a cusp. Assume $t=\frac12\ell(\beta)>0$. Since $\gamma$ and $\beta$ freely homotopic, they bound an annulus $A$, and $\gamma$ lies on one side of $\beta$, $N'(\beta)$ is the half-collar on that side. First note that if $\beta\cap\beta'\neq\emptyset$, since every arc $\beta'''$ of $\beta'\cap N_1(\beta)$ satisfying $\beta'''\cap\beta=\emptyset$ is a geodesic segment passing through $N_1(\beta)$, we have
$$\ell(\gamma')\geqslant\ell(\beta')\geqslant\ell(\beta''')\geqslant2w(\ell(\beta))=2\log\frac{e^t+1}{e^t-1}$$
Let $\widetilde\gamma\subseteq\Gamma$ be the unique arc in $\Gamma\cap N'(\beta)$ containing or contained in $\gamma'$. The winding number $W(\widetilde\gamma)$ can be defined in the beginning of Section \ref{winding_number}.

If $W(\widetilde\gamma)\geqslant1$, then using Theorem \ref{winding001} we have
\begin{align*}
\ell(\Gamma\setminus{\beta'''})&\geqslant\ell(\widetilde\gamma)=2\sinh^{-1}\bracket{\sinh (tW(\widetilde\gamma))\cosh w_1(\ell(\beta))}\\
 &\geqslant2\sinh^{-1}\bracket{\sinh t\cosh\log\frac{e^{t/2}+1}{e^{t/2}-1}}
\end{align*}
If $W(\widetilde\gamma)<1$, then in annulus $A$, $d(P,\beta)>w_1(\ell(\beta))$, hence
\begin{align*}
\ell(\Gamma\setminus{\beta'''})&\geqslant\ell(\gamma)=2\sinh^{-1}\bracket{\sinh t\cosh d(P,\beta)}\\
 &\geqslant2\sinh^{-1}\bracket{\sinh t\cosh\log\frac{e^{t/2}+1}{e^{t/2}-1}}\\
 &=2\log\bracket{\frac{(e^t+1)^2}{2e^t}+\sqrt{\bracket{\frac{(e^t+1)^2}{2e^t}}^2+1}}
\end{align*}
In both two cases, since $\beta\cap\beta'\neq\emptyset$, we have $\ell(\beta)>0$, then $T=\frac{(e^t+1)^2}{2e^t}>2$, hence we have
$$\ell(\Gamma)\geqslant\ell(\beta''')+\ell(\Gamma\setminus\beta''')\geqslant2W(\ell(\beta))+\ell(\Gamma\setminus\beta''')\geqslant H$$
where 
\begin{align*}
H=&2\log\frac{e^t+1}{e^t-1}+2\log\bracket{\frac{(e^t+1)^2}{2e^t}+\sqrt{\bracket{\frac{(e^t+1)^2}{2e^t}}^2+1}}\\
=&\log\frac{T}{T-2}+2\log\bracket{T+\sqrt{T^2+1}}
\end{align*}
\begin{align*}
\frac{dH}{dT}=\frac2{\sqrt{T^2+1}}-\frac2{T(T-2)}
\end{align*}
There exists $T_0>2$ such that when $2\leqslant T\leqslant T_0$, $\frac{dH}{dT}\leqslant0$, and when $T\geqslant T_0$, $\frac{dH}{dT}\geqslant0$. When $T=3$ $\frac{dH}{dT}<0$, and when $T=\frac{25}8$ $\frac{dH}{dT}>0$, hence $3<T_0<\frac{25}8$. Hence for $T>2$ we have a contradiction by
$$H(T)\geqslant H(T_0)>\log\frac{25}{9}+2\log\bracket{3+\sqrt{10}}>2\log(5+2\sqrt6)$$

\end{enumerate}
\end{proof}

\subsection{Case 2: $\ell(\gamma)<1.06$}

The idea of the proof is to find the homotopy type of $\Gamma$ and prove that $\Gamma$ is in a pair of pants, then we use the conclusions in Section 3 to finish the proof. Notation as before, let $\beta$ be the simple closed geodesic or cusp free homotopic to $\gamma$.

\begin{theorem}\label{LL106}
 If $\ell(\gamma)<1.06$ and $\ell(\Gamma)<2\log(5+2\sqrt6)$, then we can find a pair of pants $P\subseteq\Sigma$, such that the interior of $P$ is embedded in $\Sigma$, each boundary component is a simple closed geodesic or a cusp, and $\beta$ is one of the boundary components.
\end{theorem}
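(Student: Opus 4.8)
The plan is to use that $\beta$ is very short — since $\ell(\beta)\le\ell(\gamma)<1.06$ — to trap $\Gamma$ inside a wide collar of $\beta$ via the Generalized Collar Lemma, and then to show that the budget $\ell(\Gamma)<2\log(5+2\sqrt6)$ is too small for $\Gamma$ to fill any subsurface more complicated than a pair of pants. First I would erect the wide half-collar $N'(\beta)$ of width $w_1(\ell(\beta))$ on the side of $\beta$ carrying $\gamma$, together with its narrow partner $N''(\beta)$, so that $N_1(\beta)=N'(\beta)\cup N''(\beta)$ is an embedded annulus. A closed geodesic lying entirely inside an annulus must be a power of its core, which has no transverse self-intersection; hence $\Gamma$, having at least two, must leave $N_1(\beta)$ through its outer boundaries. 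Every maximal arc of $\Gamma$ that exits the wide side and returns has length at least $2w_1(\ell(\beta))$, while the arcs that merely wind inside the collar have length bounded below by Lemma \ref{winding001} — or Lemma \ref{winding002} when $\beta$ is a cusp — in terms of their winding number.

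The heart of the argument, which I would run as a contradiction, is to assume that $\Gamma$ is not homotopic into any pair of pants with $\beta$ as a boundary and to deduce $\ell(\Gamma)\ge 2\log(5+2\sqrt6)$. I would decompose $\Gamma$ along $\beta$ into winding arcs inside $N_1(\beta)$ and excursion arcs leaving it, and bound both the number of excursions and the topological type of the subsurface they fill. The numerical engine is that $w_1$ is decreasing, so $w_1(\ell(\beta))\ge w_1(1.06)$ is large; balancing the cost $2w_1(\ell(\beta))$ of an excursion against the winding-length lower bounds and the total length bound should leave room for essentially a single essential excursion. A single geodesic excursion of bounded length that leaves and returns to the collar wraps around one simple closed geodesic or cusp $\beta'$ disjoint from $\beta$; I would rule out the two competing configurations — an excursion reconnecting to $\beta$ from the far side (which fills a one-holed torus and forces the return arc to cross $\beta$, paying extra collar width) and an excursion wrapping two distinct curves (which forces a second excursion) — by showing either pushes $\ell(\Gamma)$ past $2\log(5+2\sqrt6)$, in the same monotonicity style as Theorem \ref{LG106}.

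With this in hand, $\Gamma$ is freely homotopic to a curve winding around $\beta$ and around a single further simple closed geodesic or cusp $\beta'$. Taking a regular neighborhood of $\beta\cup\delta$, where $\delta$ is the simple essential excursion arc, yields a subsurface with the Euler characteristic of a pair of pants, whose third boundary $\beta''$ is simple and hence freely homotopic to a simple closed geodesic or a cusp. Realizing all three boundary curves geodesically (or as cusps) produces the pair of pants $P$ with embedded interior and $\beta$ as one boundary component; since such a $P$ has geodesic boundary and is therefore convex, and $\Gamma$ is not homotopic to any boundary of $P$, its geodesic representative lies in $P$, as required for the subsequent application of Corollary \ref{pants2}.

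I expect the main obstacle to be the decomposition-and-counting step: the cheapest excursions run through the narrow side $N''(\beta)$, whose cost $2(2w(\ell(\beta))-w_1(\ell(\beta)))$ is smaller than $2w_1(\ell(\beta))$, so the crude ``each excursion costs $2w_1$'' estimate is insufficient and must be sharpened by accounting for the winding of $\gamma$ on the wide side simultaneously with the return geometry. Handling the cusp case of $\beta$ through Lemma \ref{winding002}, and controlling the borderline regime $\ell(\beta)$ near $1.06$, will require explicit monotonicity estimates paralleling those used to bound $H(T)$ in the proof of Theorem \ref{LG106}.
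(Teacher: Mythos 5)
Your overall framework --- trap $\Gamma$ in the generalized collar $N_1(\beta)$ and argue that the complement is too short to be topologically complicated --- matches the paper's starting point, but the step where you actually identify the second boundary curve $\beta'$ has a genuine gap, one you partly flag yourself. You assert that ``a single geodesic excursion of bounded length that leaves and returns to the collar wraps around one simple closed geodesic or cusp $\beta'$,'' yet nothing in your outline forces that excursion arc to be simple or homotopic to anything simple: the second self-intersection of $\Gamma$ (the one not accounted for by the loop $\gamma$) may well sit on the excursion, in which case a regular neighborhood of $\beta\cup\delta$ is not a pair of pants. Your excursion-versus-budget bookkeeping --- which you yourself concede breaks down for excursions through the narrow half-collar of width $2w(\ell(\beta))-w_1(\ell(\beta))$ --- does not exclude this configuration, so the plan as written does not close.

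The paper closes exactly this gap by a surgery you do not perform. It takes the maximal arc $\widetilde{\gamma}\subseteq\Gamma\cap N_1(\beta)$ containing $\gamma$, with winding number $\alpha$ and $k=|\widetilde{\gamma}\cap\widetilde{\gamma}|$ self-intersections, and replaces it by the arc $\zeta$ with the same endpoints and winding number reduced by $k$. The quantitative heart of the proof is the estimate $\ell(\widetilde{\gamma})-\ell(\zeta)\geqslant 2H_1(2,t)-2H_1(1,t)>2\log(5+2\sqrt6)-4\log(1+\sqrt2)$, obtained from concavity of $H_1(\cdot,t)$ and the enlarged width $w_1$; consequently the surgered closed curve $(\Gamma\setminus\widetilde{\gamma})\cup\zeta$ has length below Yamada's bound $4\log(1+\sqrt2)$ and is therefore freely homotopic to a power of a \emph{simple} closed geodesic or cusp $\beta'$, no matter how the remaining self-intersections of $\Gamma$ are distributed. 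That single application of Yamada's theorem is what replaces your excursion-counting and unsupported simplicity claim, and it is the ingredient your proposal is missing; the residual dichotomy $\beta\cap\beta'=\emptyset$ versus $\beta\cap\beta'\neq\emptyset$ is then handled exactly as in the two cases of Theorem \ref{LG106}.
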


\begin{proof}
 Choose the generalized collar $N_1(\beta)$ such that $\gamma$ and the bigger halfcollar $N'(\beta)$ is on the same side of $\beta$. Since $\ell(\gamma)<1.06<2\log{2+\sqrt5}$, and when $\beta$ is a closed geodesic, we have $1.06<2\sinh^{-1}\bracket{\sinh\frac{\ell(\beta)}{2}\cdot\cosh w_1(\ell(\beta))}$, by Lemma \ref{winding001} and Lemma \ref{winding002}, we have $\gamma\subseteq N'(\beta)$. Let $\widetilde{\gamma}$ be the arc in $\Gamma\cap N_1(\beta)$ containing $\gamma$. Assume $|\widetilde{\gamma}\cap\widetilde{\gamma}|=k$, and the endpoints of $\widetilde{\gamma}$ are $y_1,y_2\in\partial N_1(\beta)$. Let $\alpha\in(k,k+1]$ be the winding number of $\widetilde{\gamma}$, and assume $\widetilde{\gamma}\cap\widetilde{\gamma}=\{P=P_1,P_2,...,P_k\}$, and in $N_1(\beta)$, $d(P_i,\beta)$(or $d(P_i,h(\beta))$ when $\beta$ is a cusp and $h(\beta)$ is a sufficiently small horocycle) is increasing on $i\in\{1,...,k\}$. Let $\zeta\subseteq N_1(\beta)$ be the unique simple geodesic with endpoints $y_1,y_2$ and homotopic to the curve $\widetilde\zeta$ starting from $y_1$ and goes along $\widetilde{\gamma}$ to $P_k$, then goes along $\widetilde{\gamma}$ from $P_k$ to $y_2$. By definition $\zeta$ has winding number $\alpha-k\in(0,1]$.
\begin{enumerate}
\item If $\beta$ is a closed geodesic, assume $\ell(\beta)=2t>0$, let $$H_1(s,t)=\sinh^{-1}\bracket{\sinh{(st)}\cosh(w_1(2t))}$$ 
Let $u=2(\cosh \frac{t}2)^2$ we have
\begin{align*}
 \ell(\widetilde{\gamma})-\ell(\zeta)=&2H_1(k+\alpha,t)-2H_1(\alpha,t)\\
 \geqslant &2H_1(1+\alpha,t)-2H_1(\alpha,t)
 \geqslant 2H_1(2,t)-2H_1(1,t)\\
=&2\sinh^{-1}\bracket{\sinh{2t}\cosh(w_1(2t))}\\
&-2\sinh^{-1}\bracket{\sinh{t}\cosh(w_1(2t))}\\
=&2\sinh^{-1}(u(2u-2))-2\sinh^{-1}(u)\\
\geqslant&2\sinh^{-1}(2u)-2\sinh^{-1}u\\
\geqslant&2\sinh^{-1}4-2\sinh^{-1}2>1.06\\
>&2\log(5+2\sqrt6)-4\log(1+\sqrt2)
\end{align*}
The second inequality using the fact that $H_1(\alpha,t)$ is a concave function on $\alpha$ by taking second derivative of $\alpha$. The third inequality holds since $u>2$. The fourth inequality using $2\sinh^{-1}(2u)-2\sinh^{-1}u$ is an increasing function on $u>2$. Hence we have $$\ell(\Gamma\setminus\widetilde\gamma)+\ell(\zeta)<4\log(1+\sqrt2)$$ 
By Yamada's result, the closed curve $(\Gamma\setminus\widetilde\gamma)\cup\zeta$ is freely homotopic to a multiple of a simple closed geodesic, say $\beta'$, and so is $(\Gamma\setminus\widetilde\gamma)\cup\widetilde\zeta$. Hence if $\beta\cap\beta'=\emptyset$, $\Gamma$ is freely homotopic to a curve starting at $P_k$, winding around $\beta$ finitely many times, then goes to $\beta'$ then winding around finitely many times, finally goes back to $P_k$. Similar as case (1) in the proof of Theorem \ref{LG106}, the theorem holds. If $\beta\cap\beta'\neq\emptyset$ then same as case (2) in Theorem \ref{LG106} to finish the proof.
\item If $\beta$ is a cusp, similar as the previous case, let $H_1(\alpha,t)=\log(2\alpha+\sqrt{4\alpha^2+1})$ and $\ell(\widetilde{\gamma})-\ell(\zeta)=2H_1(k+\alpha,t)-2H_1(\alpha,t)$ also holds, similarly we can prove the case.
\end{enumerate}
\end{proof}

\begin{proof}[Proof of Theorem \ref{main}]
 If $\ell(\gamma)\geqslant1.06$, then Theorem \ref{LG106} implies that $\ell(\Gamma)\geqslant2\log(5+2\sqrt6)$. If $\ell(\gamma)<1.06$, then Theorem \ref{LL106} implies that $\Gamma$ lies in a pair of pants and using Corollary \ref{length_pants} we get the conclusion.
\end{proof}

\begin{remark}
    For \(k > 2\), the proof strategy remains fundamentally the same, although the details diverge as the geometry of short geodesics becomes more complex. For small \(k\), the types of closed geodesics with length less than \(2\cosh^{-1}(2k+1)\) admit a complete classification into a manageable number of types. This classification strongly suggests that \(M_k\) is computable exactly in these cases. 
    
    For larger values of \(k\), we have a roadmap for obtaining sharp bounds: following the approach in \cite{Y2806, Y2800}, a thick-thin decomposition can be employed. The behavior of geodesics in the thin parts is explicitly characterizable, while their intersections in the thick parts can be estimated from below using the injectivity radius. We are confident that a refined optimization of this framework will yield improved bounds on \(M_k\) for a substantial range, such as \(3 \leq k < 1750\). Furthermore, the same methodology also opens up the possibility of analyzing other characteristics of geodesics on surfaces.
\end{remark}


\begin{thebibliography}{100}

\bibitem{A2817}
Colin~C~Adams.
\newblock Maximal cusps, collars, and systoles in hyperbolic surfaces.
\newblock \emph{Indiana University Mathematics Journal}, Volume 47, Issue 2, 419--437, 1998.

\bibitem{B2795}
Claire~M~Baribaud.
\newblock Closed geodesics on pairs of pants.
\newblock \emph{Israel Journal of Mathematics}, Volume 109, 339--347, 1999.

\bibitem{B2794}
Ara~Basmajian.
\newblock The stable neighborhood theorem and lengths of closed geodesics.
\newblock \emph{Proceedings of the American Mathematical Society}, Volume 119, Number 1, 217--224, 1993.

\bibitem{B2803}
Ara~Basmajian.
\newblock Universal length bounds for non-simple closed geodesics on hyperbolic surfaces.
\newblock \emph{Journal of Topology}, Volume 6, Issue 2, 513--524, 2013.

\bibitem{Y2801}
Ara Basmajian, Hugo Parlier and Hanh Vo.
\newblock The shortest non-simple closed geodesics on hyperbolic surfaces,
\newblock \emph{Mathematische Zeitschrift}, 306 (2024), no. 1, Paper No. 8, 19 pp.

\bibitem{PP1}
Peter Buser and Peter Sarnack.
\newblock On the period matrix of a Riemann surface of large genus(With an appendix by J. H. Conway and N. J. A. Sloane),
\newblock \emph{Inventiones Mathematicae}, 117 (1994), no. 1, 27–56.

\bibitem{EP2802}
Viveka~Erlandsson and Hugo~Parlier.
\newblock Short closed geodesics with self-intersections.
\newblock \emph{Mathematical Proceedings of the Cambridge Philosophical Society}, Volume 169, Issue 3, 623--638, 2020.

\bibitem{FM2012}
Benson Farb and Dan~Margalit.
\newblock A Primer on Mapping Class Groups.
\newblock \emph{Princeton Unviersity Press}, 2012.

\bibitem{H2797}
John~Hempel.
\newblock Traces, lengths, and simplicity for loops on surfaces.
\newblock \emph{Topology and its Applications}, Volume 18, Issue 2-3, 153--161, 1984.


\bibitem{Y2806}
Wujie Shen.
\newblock Nonsimple closed geodesics with given intersection number on hyperbolic surfaces,
\newblock arXiv2305.00638, preprint.

\bibitem{Y2800}
Wujie Shen and Jiajun Wang.
\newblock Minimal length of nonsimple closed geodesics in hyperbolic surfaces,
\newblock arXiv:2207.08360v1, preprint.

\bibitem{Y2805}
Hanh Vo.
\newblock Short closed geodesics on cusped hyperbolic surfaces,
\newblock \emph{Pacific Journal of Mathematics}, Volume 318, 127-151, 2022.

\bibitem{YHW1}
Yunhui Wu and Yuhao Xue.
\newblock Optimal lower bounds for first eigenvalues of Riemann surfaces for large genus,
\newblock \emph{American Journal of Mathematics}, 144 (2022), no. 4, 1087–1114.

\bibitem{YHW2}
Yunhui Wu and Yuhao Xue.
\newblock Random hyperbolic surfaces of large genus have first eigenvalues greater than $\frac{3}{16}-\varepsilon$,
\newblock \emph{Geometric and Functional Analysis}, 32 (2022), no. 2, 340–410.

\bibitem{Y2798}
Akira~Yamada.
\newblock On Marden's universal constant of Fuchsian groups.
\newblock \emph{Kodai Mathematical Journal}, Volume 4, Issue 2, 266--277, 1981.

\bibitem{Y2799}
Akira~Yamada.
\newblock On Marden's universal constant of Fuchsian groups, II.
\newblock \emph{Journal d’Analyse Math`ematique}, Volume 41, 234--248, 1982.



\end{thebibliography}
\end{document}